\newcommand{\arxiv}[1]{\href{http://arxiv.org/abs/#1}{\texttt{arXiv:#1}}}
\newtheorem{theorem}{Theorem}[section]
\newtheorem{lemma}[theorem]{Lemma}
\newtheorem{definition}[theorem]{Definition}
\newtheorem{remark}[theorem]{Remark}
\newtheorem{problem}[theorem]{Problem}
\newcommand{\Hnp}{\mathcal{H}^{(r)}(n,p)}
\newcommand{\cH}{\mathcal{H}}
\newcommand{\cG}{\mathcal{G}}
\newcommand{\cF}{\mathcal{F}}
\newcommand{\cE}{\mathcal{E}}
\newcommand{\EE}{\mathbb{E}}
\newcommand{\eps}{\varepsilon}
\newcommand{\FnD}{\mathcal{F}^{(r)}(n,\Delta)}
\newcommand{\FnDg}{\cF^{(2)}(n,\Delta)}
\newcommand{\cK}{\mathcal{K}}
\newcommand{\dcup}{\dot{\cup}}
\newcommand{\Erm}{\cE^{(r)}(m)}
\newcommand{\tH}{\tilde{H}}
\title{On almost optimal universal hypergraphs}
\author{Samuel Hetterich, Olaf Parczyk and Yury Person}
\address{Goethe-Universit\"at, Institut f\"ur Mathematik,
  Robert-Mayer-Str. 10, 60325 Frankfurt am Main, Germany}
\email{hetterich | parczyk | person@math.uni-frankfurt.de}
\date{\today}
 \thanks{
    The research of Olaf Parczyk and Yury Person leading to these results was supported by DFG grant PE 2299/1-1. The research of Samuel Hetterich leading to these results has received funding from the European Research Council under the European Union's Seventh Framework Programme (FP/2007-2013) / ERC Grant Agreement n. 278857-PTCC}
\begin{document}

\begin{abstract}

A hypergraph $H$ is called universal for a family $\cF$ of hypergraphs, if it contains every hypergraph $F \in \cF$ as a copy.
For the family of $r$-uniform hypergraphs with maximum vertex degree bounded by $\Delta$ and at most $n$ vertices any universal hypergraph has to contain $\Omega(n^{r-r/\Delta})$ many edges.
We exploit constructions of Alon and Capalbo
 to obtain  universal $r$-uniform hypergraphs with the optimal number of edges $O(n^{r-r/\Delta})$ when $r$ is even, $r \mid \Delta$ or $\Delta=2$. Further we generalize the result of Alon and Asodi
 about optimal universal graphs for the family of graphs with at most $m$ edges and no isolated vertices to hypergraphs.
\end{abstract}

\maketitle

\section{Introduction}

Let $\FnD$ be the family of $r$-uniform hypergraphs on at most $n$ vertices and with maximum vertex degree bounded by $\Delta$.
An $r$-uniform hypergraph is called $\FnD$-universal (or universal for $\FnD$) if it contains every $F \in \FnD$ as a copy. The purpose of this paper is to show that the existence and almost optimal explicit constructions of many universal hypergraphs follow from the corresponding results about universal graphs.

 The problem of finding various universal graphs has a long history, see an excellent survey of Alon~\cite{Alon10} and the references therein. Alon, Capalbo, Kohayakawa, R\"odl,  Ruci\'nski and Szemer\'edi studied in~\cite{ACKRRS00,ACKRRS01} explicit constructions of $\FnDg$-universal graphs and the universality of the random graph $G(n,p)$ as well. Thus, in~\cite{ACKRRS01} they constructed first nearly optimal $\FnDg$-universal graphs ($\Delta\ge 3$) with $O(n)$ vertices and $O(n^{2-2/\Delta}\ln^{1+8/\Delta}n)$ edges, while it was noted by the same authors that any such universal graph has to contain $\Omega(n^{2-2/\Delta})$ edges. Notice further, that  in the case $\Delta=2$ the square of a Hamilton cycle is $\cF^{(2)}(n,\Delta)$-universal~\cite{AC07} (and thus $2n$ edges are enough in this case). In two subsequent papers, Alon and Capalbo~\cite{AC07,AC08} improved the result of~\cite{ACKRRS01} and obtained $\FnDg$-universal graphs with the optimal number $\Theta(n^{2-2/\Delta})$ of edges and only $O(n)$ vertices and also provided $\FnDg$-universal graphs on $n$ vertices with almost optimal number of edges.

\begin{theorem}[Alon and Capalbo~\cite{AC07,AC08}]
\label{theom:AC}
For any $\Delta \ge 2$ there exist explicitly constructible $\cF^{(2)}(n,\Delta)$-universal graphs on $O(n)$ vertices with $O(n^{2-2/\Delta})$ edges and on $n$ vertices with  $O(n^{2-2/\Delta} \ln^{4/\Delta} n)$ edges.
\end{theorem}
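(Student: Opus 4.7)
Since this is a theorem of Alon and Capalbo that will be used as a black box in what follows, I only sketch the plan one would follow. The scheme has three stages: (i) construct bipartite building blocks that are universal at each scale, (ii) prove that every target graph admits a recursive separator decomposition, and (iii) assemble the building blocks along the decomposition and analyse the resulting edge count.

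For the building blocks, the idea is to construct, for each scale $N$, an explicit bipartite graph $B_N$ on $O(N)+O(N)$ vertices with $O(N^{2-2/\Delta})$ edges that is universal for all bipartite graphs whose two parts have size at most $N$ and whose maximum degree is at most $\Delta$. Natural candidates come from algebraic expanders (Margulis, Lubotzky--Phillips--Sarnak, or zig-zag products): the key property one verifies is that every subset $A$ on one side satisfies $|N_{B_N}(A)|\ge \Delta |A|$, so that Hall's theorem combined with an online greedy extension argument produces the required copy inside $B_N$.

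For the separator step, the input is the fact that every $F\in \cF^{(2)}(n,\Delta)$ admits a vertex subset $S$ of size $O(n^{1-1/\Delta})$ whose removal leaves connected components of size at most $\alpha n$ for some constant $\alpha<1$ depending only on $\Delta$; this is a Lipton--Tarjan-type theorem adapted to the bounded-degree setting. Iterating yields a dyadic hierarchy of separators with geometrically decreasing sizes, and the universal graph $U$ is assembled by placing a copy of the appropriate $B_N$ between consecutive levels of this hierarchy. The edge counts telescope as a geometric series to $O(n^{2-2/\Delta})$ and the vertex counts to $O(n)$; an embedding of any given $F$ is produced by first computing its separator hierarchy and then recursively embedding the successive separators into the corresponding blocks $B_N$, using the bipartite universality to extend the partial embedding at each step.

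The main obstacle is to ensure that the $B_N$ are \emph{simultaneously} universal for every bipartite bounded-degree graph at scale $N$, not merely for one such graph at a time, while still achieving the edge count $O(N^{2-2/\Delta})$; this forces a delicate combination of strong vertex expansion in the $B_N$ with a careful alignment of expansion parameters to the scales in the separator hierarchy. The $n$-vertex variant (second half of the theorem) is harder precisely because one can no longer afford the slack of $O(n)$ total vertices in $U$: the building blocks must live on exactly $N+N$ vertices, which forces slightly denser expanders (or a randomised variant) and is exactly what accounts for the $\ln^{4/\Delta} n$ loss.
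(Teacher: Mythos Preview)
Your sketch does not match the Alon--Capalbo approach and, more importantly, contains a fatal gap in the separator step. You assert that every $F\in\cF^{(2)}(n,\Delta)$ admits a separator of size $O(n^{1-1/\Delta})$ whose removal leaves components of size at most $\alpha n$. This is false: bounded maximum degree alone does not imply small separators. A random $\Delta$-regular graph is an expander, so any set whose removal leaves pieces of size at most $\alpha n$ must have $\Omega(n)$ vertices. Lipton--Tarjan-type theorems require additional structure (planarity, an excluded minor, bounded treewidth), none of which is available here. Once the separator hierarchy collapses, the rest of the scheme --- the recursive embedding into the bipartite blocks $B_N$ --- has nothing to recurse on.

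For comparison, the actual construction (sketched later in Section~\ref{sec:proof2} of the paper) proceeds quite differently. One fixes a high-girth Ramanujan graph $R$ on $m\approx n^{1/\Delta}$ vertices and takes the universal graph $G$ on the vertex set $V(R)^\Delta$, joining two $\Delta$-tuples whenever at least two of their coordinates are within distance $4$ in $R$; this immediately gives $O(n)$ vertices and $O(n^{2-2/\Delta})$ edges. The embedding of a given $F$ with $\Delta(F)\le\Delta$ uses not a separator decomposition but the graph decomposition Theorem~\ref{lem:ACdecomposition}: $F$ is covered by $\Delta$ thin spanning subgraphs $F_1,\ldots,F_\Delta$ (each embeddable into a power of a path) so that every edge lies in exactly two of them. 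One then routes each $F_i$ along a non-returning walk in $R$ in the $i$th coordinate, and the spectral/girth properties of $R$ guarantee that the resulting map into $V(R)^\Delta$ is injective (this is Lemma~\ref{lem:AChom}). The $n$-vertex variant with the $\ln^{4/\Delta} n$ loss comes from a related but distinct construction in~\cite{AC07}, again based on the thin-graph decomposition rather than on separators.
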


 
 Universality of random graphs has been also a subject of intensive study by various researchers.  Alon, Capalbo, Kohayakawa, R\"odl,  Ruci\'nski and Szemer\'edi proved in~\cite{ACKRRS00} that the random graph $G((1+\eps)n,p)$ is $\FnDg$-universal for $p\ge C_{\eps,\Delta} (\ln n/n)^{1/\Delta}$ a.a.s., where $C_{\eps,\Delta}$ is a constant that depends only on $\Delta$ and $\eps$.  Since then several improvements of this result have been given. So, for example, in the spanning case Dellamonica, Kohayakawa, R{\"o}dl and Ruci{\'n}ski~\cite{DKRR15} showed that $G(n,p)$ is $\FnDg$-universal  for $p\ge C_\Delta (\ln n/n)^{1/\Delta}$ (for $\Delta \ge 3$) a.a.s.,  while the  case $\Delta=2$ was covered by Kim and Lee~\cite{kim2014universality}. In the almost spanning case, Conlon, Ferber, Nenadov and \v{S}kori\'{c}~\cite{conlon2015almost} recently showed that for every $\eps>0$ and $\Delta \ge 3$   the random graph $G((1+\eps)n,p)$ is $\FnDg$-universal for $p = \omega(n^{-1/(\Delta-1)} \ln^5 n)$ a.a.s.

The study of universal graphs has been extended recently in~\cite{PP_SparseUniversal} to universal hypergraphs by the second and third author, who showed that the random $r$-uniform hypergraph $\Hnp$ is $\FnD$-universal a.a.s.\ for $p\ge C (\ln n/n)^{1/\Delta}$, where $C$ is a constant depending on $r$ and $\Delta$ only. 
On the other hand, it follows from the asymptotic number of $\Delta$-regular $r$-uniform hypergraphs on $n$ vertices, see e.g.\  Dudek, Frieze, Ruci\'{n}ski and \v{S}ileikis~\cite{dudek2013approximate}, that  \emph{any} $\FnD$-universal hypergraph must possess  $\Omega (n^{r-r/\Delta})$ edges~\cite{PP_SparseUniversal}. 
Moreover, in~\cite{PP_SparseUniversal} explicit constructions of $\FnD$-universal hypergraphs on $O(n)$ vertices with  $O(n^{r-2/\Delta})$ edges were derived from Theorem~\ref{theom:AC}, and the existence of even sparser universal hypergraphs was obtained from the results on universality of random graphs~\cite{conlon2015almost,DKRR15}. 
For example, it was shown that there exist $\FnD$-universal hypergraphs with $n$ vertices and $\Theta\left(n^{r-\frac{r}{2\Delta}}(\ln n)^{\frac{r}{2\Delta}}\right)$ edges, which shows that the best known lower and upper bounds 
are at most the multiplicative factor $n^{\frac{r}{2\Delta}}\cdot \text{polylog}(n)$ apart. 
See the summary of these results in the table below. Here and in the following the constants in the $O$-terms depend on $r$ and $\Delta$.

\begin{table}[h]
\caption{Known universal hypergraph results for $r\ge3$~\cite{PP_SparseUniversal}.}
\begin{center}
\begin{tabular}{| l | r|}
\hline
\multicolumn{2}{| l |}{Explicit constructions of $\FnD$-universal hypergraphs}\\
\hline
$O(n)$ vertices & $O(n^{r-2/\Delta})$ edges\\
\hline
$n$ vertices & $O(n^{r-2/\Delta} \ln^{4/\Delta}n)$ edges\\
\hline
\multicolumn{2}{|l | }{Existence results of $\FnD$-universal hypergraphs}\\
\hline
$n$ vertices & $\Theta\left(n^{r-\frac{r}{2\Delta}}(\ln n)^{\frac{r}{2\Delta}}\right)$ edges\\
\hline
$(1+\eps)n$ vertices & $\omega\left(n^{r-\frac{\binom{r}{2}}{(r-1)\Delta-1}}(\ln n)^{5\binom{r}{2}}\right)$ edges \\
\hline
\end{tabular}
\end{center}
\end{table}

Another family of graphs that received attention is  the family $\Erm$ of $r$-uniform hypergraphs with  at most $m$ edges and without isolated vertices. 
 Babai, Chung, Erd\H{o}s, Graham and Spencer \cite{BCEGS82} proved that any $\cE^{(2)}(m)$-universal graph must contain $\Omega(m^2/\ln^2m)$ many edges and there exists one on $O(m^2 \ln \ln m/\ln m)$ edges.
 Alon and Asodi \cite{AA02} closed this gap by proving the existence of an $\cE^{(2)}(m)$-universal graph on $O(m^2/\ln^2m)$ edges.

\subsection{New results}
We will prove the following theorem that allows to construct $r$-uniform universal hypergraphs from universal hypergraphs of smaller uniformity and we use universal graphs from~\cite{AC07,AC08} with carefully chosen parameters to provide pretty good and in many cases almost optimal $\FnD$-universal hypergraphs.

\begin{theorem}\label{theom:main} Let $r\ge 3$ and $\Delta\ge 2$ be integers. Then the following hold.
\begin{enumerate}
\item Let $r'$ be an integer. If $r'\mid r$ and there exists an $\mathcal{F}^{(r')}(n,\Delta)$-universal hypergraph with $O(n)$ vertices and $O(n^{r'-r'/\Delta})$ edges, then there exists an $\FnD$-universal hypergraph with $O(n)$ vertices and $O(n^{r-r/\Delta})$ edges. \label{case:prime_case}
\item For even $r$ there exist explicitly constructible $\FnD$-universal hypergraphs on $O(n)$ vertices with $O (n^{r-r/\Delta})$ edges and on $n$ vertices with $O (n^{r-r/\Delta} \ln^{2r/\Delta}(n) )$ edges.\label{case:even_case}
\item For odd $r$ there exist explicitly constructible $\FnD$-universal hypergraphs on $O(n)$ vertices with $O (n^{r-(r+1)/ \Delta^\prime})$ edges and on $n$ vertices with $O (n^{r-(r+1)/ \Delta^\prime} \ln^{2(r+1)/\Delta^\prime}(n) )$ edges, where $\Delta^\prime = \lceil (r+1)\Delta/r \rceil$. In particular, if $r\mid \Delta$ this leads to the almost optimal $O(n^{r-r/\Delta}\mathrm{polylog}(n))$ edges.\label{case:odd_case}
\end{enumerate}
\end{theorem}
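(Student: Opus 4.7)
My plan treats the three parts in order, with the key reductive idea captured in Part~(1) and then fed with the Alon--Capalbo graphs of Theorem~\ref{theom:AC} in Parts~(2) and~(3). The common thread is to reduce embeddings of $r$-uniform hypergraphs to embeddings of lower-uniformity auxiliary objects by partitioning each hyperedge into smaller blocks.

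For Part~(1), I would set $k = r/r'$ and define $U$ on $V(U) = V(U')$ with
\[
E(U) = \bigl\{\, e_1 \cup \dots \cup e_k : e_1, \dots, e_k \in E(U') \text{ pairwise vertex-disjoint}\,\bigr\}.
\]
Then $|V(U)| = O(n)$ and $|E(U)| \le |E(U')|^k = O(n^{r - r/\Delta})$. To show $U$ is $\FnD$-universal, take $F \in \FnD$ and partition each hyperedge of $F$ arbitrarily into $k$ disjoint $r'$-blocks; let $F'$ be the $r'$-uniform hypergraph of all these blocks on $V(F)$. Every $v \in V(F)$ lies in at most $\Delta$ hyperedges of $F$, each contributing exactly one block containing $v$, so $\Delta(F') \le \Delta$. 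Universality of $U'$ produces an injection $\phi\colon V(F') \to V(U')$ embedding $F'$ into $U'$; since $\phi$ is injective, the $k$ blocks of each $F$-edge $e$ map to pairwise vertex-disjoint edges of $U'$, and their union --- which is precisely $\phi(e)$ --- is an edge of $U$. Part~(2) is the special case $r' = 2$ of Part~(1) applied to the two variants of Theorem~\ref{theom:AC}: the exponent $k(2 - 2/\Delta) = r - r/\Delta$ is automatic, and $(\ln^{4/\Delta}n)^{r/2} = \ln^{2r/\Delta}n$ gives the polylog in the $n$-vertex variant.

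For Part~(3), since $2 \nmid r$ the reduction with $r' = 2$ is not directly available, and I would instead pad first. Given $F \in \FnD$, form an $(r+1)$-uniform $F^+$ on $V(F)$ by adjoining to each hyperedge $e$ an extra vertex $u_e \in V(F) \setminus e$, choosing the $u_e$'s greedily (or by a Hall-type argument) so that no vertex of $V(F)$ is used more than $\lceil \Delta/r \rceil$ times; this is feasible since $|E(F)| \le n\Delta/r$ while each edge admits at least $n - r$ valid choices. Then
\[
\Delta(F^+) \le \Delta + \lceil\Delta/r\rceil = \lceil(r+1)\Delta/r\rceil = \Delta',
\]
so $F^+ \in \mathcal{F}^{(r+1)}(n,\Delta')$. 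Starting from a $2$-uniform $\mathcal{F}^{(2)}(O(n),\Delta')$-universal graph $G$ of Theorem~\ref{theom:AC}, I would construct $U$ on $V(G)$ whose edges are those $r$-sets carrying a designated sub-pattern in $G$ adapted to the odd-$r$ decomposition --- roughly, $(r-1)/2$ disjoint pair-edges of $G$ inside the $r$-set together with a further pair-edge of $G$ pinning down the padding vertex $u_e$ of $F^+$. Applying Part~(1) to $G$ and $F^+$ yields an embedding of $F^+$ whose restriction to $V(F)$ embeds $F$ into $U$, and a careful count of the sub-pattern gives $|E(U)| = O(n^{r - (r+1)/\Delta'})$; the polylog factor in the $n$-vertex variant is inherited from Theorem~\ref{theom:AC}.

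The hard part will be Part~(3). The naive route --- taking every $r$-subset of every edge of an $(r+1)$-uniform Part~(2) universal --- overcounts by a factor of roughly $n$, giving only $O(n^{(r+1)-(r+1)/\Delta'})$. The delicate step is therefore to choose the sub-pattern in $G$ so that the padding vertex is pinned by an additional $G$-edge rather than left free; this simultaneously saves the extra factor of $n$ and matches the decomposition of $F^+$ produced by Part~(1), yielding the stated $O(n^{r - (r+1)/\Delta'})$ edge bound.
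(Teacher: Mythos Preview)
Your proofs of Parts~(1) and~(2) are correct and essentially identical to the paper's: the paper phrases the construction via the ``hitting'' framework (Lemma~\ref{lem:Hruni}) with $F$ the $r'$-uniform perfect matching on $r$ vertices, which is exactly your $U$, and your auxiliary $F'$ is the hitting hypergraph.

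Part~(3), however, has a genuine gap. Your padding $F \mapsto F^+ \in \cF^{(r+1)}(n,\Delta')$ and the Hall-type balancing are fine, and you correctly see that taking all $r$-subsets of edges of an $(r+1)$-uniform universal overcounts by a factor of $n$. But your proposed fix --- ``a further pair-edge of $G$ pinning down the padding vertex $u_e$'' --- cannot work. The hypergraph $U$ must be a fixed set of $r$-subsets of $V(G)$, defined independently of any particular $F$; a condition of the form ``the unmatched vertex of $S$ has a $G$-edge to $\phi(u_e)$'' cannot appear in the definition of $E(U)$, because $u_e$ depends on $F$ and on the edge $e$. If you relax it to ``the unmatched vertex has \emph{some} $G$-neighbour outside $S$,'' the condition is vacuous (the Alon--Capalbo graph has minimum degree $\Theta(n^{1-2/\Delta'}) \gg r$), and you are back to counting $r$-sets with a near-perfect $G$-matching: $O\bigl(|E(G)|^{(r-1)/2}\cdot n\bigr)=O\bigl(n^{r-(r-1)/\Delta'}\bigr)$, which is off by $n^{2/\Delta'}$.

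The paper's remedy is to keep all $(r+1)/2$ edges of the hit pattern \emph{inside} the $r$-set: take the pattern to be the disjoint union of a matching on $r-3$ vertices and a path $P_3$, so that the ``extra'' edge is the second edge of the $P_3$ rather than an edge to an external padding vertex. A Hall argument (choosing, for each hyperedge $e$, which of its \emph{own} vertices serves as the degree-$2$ centre of the $P_3$, each vertex being used at most $\lceil\Delta/r\rceil$ times) shows that $\cF^{(2)}(n,\Delta')$ hits $\FnD$ on this pattern; this is the paper's Lemma~\ref{lem:lowdegree}. The edge count of the resulting $r$-uniform universal is then at most $|E(G)|^{(r-1)/2}\cdot\Delta(G)$, and the decisive ingredient --- absent from your sketch --- is that the Alon--Capalbo graphs satisfy $\Delta(G)=O(|E(G)|/|V(G)|)$, which replaces the free factor $n$ by $n^{1-2/\Delta'}$ and gives $O\bigl(n^{r-(r+1)/\Delta'}\bigr)$.
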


We see that in any case the lower and upper bounds on the edge densities of optimal universal hypergraphs differ by at most a factor of  $n^{O(r^2/\Delta^2)}$.

By applying a graph decomposition result of Alon and Capalbo from~\cite{AC07} we obtain yet another case when constructed universal hypergraphs match the lower bound.
\begin{theorem}
\label{theom:sec}
Let $r$ be an integer. 
Then there exists an explicitly constructible $\cF^{(r)}(n,2)$-universal hypergraph on $O(n)$ vertices and $O(n^{r/2})$ edges.
\end{theorem}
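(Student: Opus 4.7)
We handle the two parities of $r$ separately. For $r$ even, the claim is immediate from Theorem~\ref{theom:main}(\ref{case:even_case}) applied with $\Delta=2$: it already delivers an explicit $\mathcal{F}^{(r)}(n,2)$-universal hypergraph on $O(n)$ vertices with $O(n^{r-r/2})=O(n^{r/2})$ edges.

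For $r$ odd, write $r=2s+1$. Given $F\in\mathcal{F}^{(r)}(n,2)$, the plan is to split every hyperedge $e \in F$ as $e=A_e \cup B_e$ with $|A_e|=s$ and $|B_e|=s+1$, and then to analyze the resulting bipartite auxiliary multigraph $\mathcal{G}_F$ whose vertex classes are the distinct $s$- and $(s+1)$-halves and whose edges are the pairs $\{A_e,B_e\}$, one per hyperedge. The key observation is that because $\Delta(F)=2$, every $s$-subset and every $(s+1)$-subset of $V(F)$ lies in at most two hyperedges of $F$; otherwise some vertex would belong to at least three edges of $F$. Consequently $\mathcal{G}_F$ has maximum degree at most $2$ and is a disjoint union of paths and cycles in the bipartite sense.

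The universal hypergraph $H$ is then built as a blow-up of an $\mathcal{F}^{(2)}(M,\Delta')$-universal graph $\mathcal{G}$ provided by Theorem~\ref{theom:AC}. Explicitly, $V(H)=[M]\times[k]$ for a suitable block size $k$ and $M=O(n/k)$, so that $|V(H)|=Mk=O(n)$, and for every edge $\{u,v\}\in E(\mathcal{G})$ one puts into $E(H)$ all $r$-sets that split as $(s,s+1)$ or $(s+1,s)$ between the two blocks $\{u\}\times[k]$ and $\{v\}\times[k]$. Tuning $k$ and $\Delta'$ as functions of $r$ makes the total edge count $O(|E(\mathcal{G})|\cdot k^{r})=O(M^{2-2/\Delta'})\cdot O(k^{r})=O(n^{r/2})$, matching the target.

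The hard part will be verifying universality. Embedding $F$ into $H$ amounts to choosing the splits $A_e,B_e$ and assigning the vertices of $V(F)$ to blocks of $V(H)$ so that (i) each half lies inside one block, (ii) no block is overfull, and (iii) the induced ``block graph'' on $[M]$ has maximum degree at most $\Delta'$, so that it embeds into $\mathcal{G}$. The obstruction is that two halves sharing a vertex of $V(F)$ are forced into the same block, and these sharing relations can chain through many vertices of $F$-degree $2$, potentially merging $\Omega(n)$ halves into a single block. This is precisely where the graph decomposition result of Alon and Capalbo from~\cite{AC07} enters: it is used to choose the splits so that the halves-sharing graph breaks up into short, controllable pieces, keeping block sizes bounded by $k$ and the block graph within maximum degree $\Delta'$. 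Once such a split is in hand, applying the universality of $\mathcal{G}$ from Theorem~\ref{theom:AC} finishes the embedding of $F$ into $H$.
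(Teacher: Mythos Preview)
Your even-$r$ reduction is fine and matches the paper. For odd $r\ge 5$, however, the blow-up scheme as stated cannot work. You take $k$ and $\Delta'$ to be constants depending only on $r$, so $M=\Theta(n)$ and your hypergraph $H$ has $O(|E(\cG)|\cdot k^{r})=O(M^{2-2/\Delta'})=O(n^{2-2/\Delta'})$ edges; but $2-2/\Delta'<2<r/2$, so $H$ has $o(n^{r/2})$ edges, contradicting the lower bound $\Omega(n^{r-r/2})=\Omega(n^{r/2})$ that \emph{any} $\cF^{(r)}(n,2)$-universal hypergraph must satisfy. Thus your $H$ is provably not universal for odd $r\ge 5$, no matter how the splits are chosen. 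Even for $r=3$, where the numerology $\Delta'=4$ is at least consistent with the target edge count, the crucial step is only asserted: you never say to which graph the Alon--Capalbo decomposition theorem is applied, nor how its output (thin spanning subgraphs of a bounded-degree \emph{graph}) could possibly translate into a choice of splits $e=A_e\cup B_e$ that keeps the ``halves-share-a-vertex'' components of size at most $k$. Already a long loose $3$-uniform cycle shows that naive splits force arbitrarily long chains of merged halves, and nothing in Theorem~\ref{lem:ACdecomposition} speaks to splitting hyperedges.

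For comparison, the paper's route is entirely different and does not use a blow-up. It works through the hitting-graph machinery of Lemma~\ref{lem:Hruni}: for $r=3$ one proves (Lemma~\ref{lem:hitting}, using the line graph of $H$ together with Lemma~\ref{lem:ACmatching}) that every $H\in\cF^{(3)}(n,2)$ is hit on $P_3$ by a $(4,3,8)$-decomposable graph, and then constructs an $\cF_{4,3,8}(n)$-universal graph $G_{4,3,8}(n)$ on $O(n)$ vertices with maximum degree $O(n^{1/4})$ via a Ramanujan-product construction; the desired hypergraph is $\cH_{P_3,3}(G_{4,3,8}(n))$. For odd $r\ge 5$ the paper iteratively peels off $2$- and $3$-uniform pieces of $H$ through its line graph and Lemma~\ref{lem:ACmatching}, reducing to the $r=3$ case combined with $(r-3)/2$ copies of $C_n^4$.
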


Finally we briefly study $\Erm$-universal hypergraphs.  It can be shown  for fixed $r\ge 3$  that  any $\Erm$-universal hypergraph must contain at least $\Omega(m^r/\ln^rm)$ many edges. This can be seen by a simple counting argument as in~\cite{BCEGS82} or by counting $(r\ln m)$-regular $r$-uniform hypergraphs on $m/\ln m$ vertices as was done in the graph case  in~\cite{AA02}. We prove that the optimal existence result of Alon and Asodi gives rise to optimal $\Erm$-universal hypergraphs.

\begin{theorem}\label{thm:Erm}
There exist $\Erm$-universal hypergraphs with $O(m^r/\ln^rm)$ edges.
\end{theorem}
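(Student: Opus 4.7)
The plan is to reduce the hypergraph problem to the graph case of Alon and Asodi via the $2$-shadow and then to bound the number of $r$-cliques in the resulting graph using the Kruskal--Katona inequality.

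Given $F \in \Erm$, the first step is to associate to $F$ its \emph{$2$-shadow} $\partial_2 F$, the graph on $V(F)$ whose edges are all pairs contained in some edge of $F$. Since each $r$-edge of $F$ contributes at most $\binom{r}{2}$ pairs and $F$ has no isolated vertex, $\partial_2 F \in \cE^{(2)}(\binom{r}{2}m)$. By the theorem of Alon and Asodi~\cite{AA02}, there exists an $\cE^{(2)}(\binom{r}{2}m)$-universal graph $G^*$ with $O(m^2/\ln^2 m)$ edges.

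Next, let $U$ be the $r$-uniform hypergraph on $V(G^*)$ whose edges are precisely the $r$-cliques of $G^*$. To bound $|E(U)|$, I would apply the Kruskal--Katona theorem in the form: any $r$-uniform family whose $2$-shadow has size at most $M$ has size at most $O(M^{r/2})$. Since the $2$-shadow of the $r$-cliques of $G^*$ is contained in $E(G^*)$, this gives
\[
|E(U)| = O\bigl(|E(G^*)|^{r/2}\bigr) = O\bigl((m^2/\ln^2 m)^{r/2}\bigr) = O(m^r/\ln^r m),
\]
matching the lower bound recorded in the paper.

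Finally, to verify universality, for any $F \in \Erm$ take an embedding $\phi \colon V(F) \hookrightarrow V(G^*)$ of $\partial_2 F$ into $G^*$. For every $r$-edge $f = \{v_1,\ldots,v_r\} \in E(F)$, the images $\phi(v_1),\ldots,\phi(v_r)$ are pairwise adjacent in $G^*$, hence $\{\phi(v_1),\ldots,\phi(v_r)\}$ is an $r$-clique of $G^*$ and therefore an edge of $U$; thus $\phi$ realises $F$ as a subhypergraph of $U$. I do not anticipate a significant obstacle: the $2$-shadow reduction is parameter-preserving up to the constant factor $\binom{r}{2}$, and both ingredients (the graph universal of Alon--Asodi and the Kruskal--Katona inequality) are classical.
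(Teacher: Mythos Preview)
Your argument is correct and, in fact, cleaner than the paper's. The reduction to the $2$-shadow and the definition of the clique hypergraph $U$ are identical to what the paper does (it writes $\cK_r(G)$ for your $U$). The difference lies entirely in how the number of $r$-cliques in the universal graph $G^*$ is bounded.

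The paper does not invoke Kruskal--Katona. Instead it goes back into the explicit random construction of Alon and Asodi, enumerates the possible ``types'' of $r$-cliques according to how their vertices are distributed among the parts $V_0,V_1,\dots,V_k$ of that construction, and computes the expected number of cliques of each type; a page of summation then yields $\EE|E(\cK_r(G))|=O(m^r/\ln^r m)$, after which Markov's inequality is combined with the $\ge 1/4$ success probability of universality. Your route avoids all of this: you use only the single piece of information that the Alon--Asodi graph has $O(m^2/\ln^2 m)$ edges, and the Kruskal--Katona bound (in its Lov\'asz form) that a graph with $M$ edges contains at most $O(M^{r/2})$ copies of $K_r$. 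This is shorter, and it is also more robust: your argument treats the graph universal as a black box, so any future explicit $\cE^{(2)}(\binom{r}{2}m)$-universal graph with $O(m^2/\ln^2 m)$ edges would immediately yield an explicit $\Erm$-universal hypergraph with the optimal edge count, whereas the paper's calculation is tied to the particular random model.
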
 

\subsection{Organization of the paper}
In the next section we introduce a very useful concept of hitting graphs, which we use in Section~\ref{sec:proof1} to prove 
Theorem~\ref{theom:main} and in Section~\ref{sec:proof2} along with a graph decomposition result from \cite{AC07} to prove Theorem~\ref{theom:sec}. In the last section we will discuss $\Erm$-universal hypergraphs and prove Theorem~\ref{thm:Erm}. We make no effort in optimizing the constants depending on $r$ and $\Delta$ hidden in the $O$-notation.

\section{Hitting graphs}
Here we define a concept of hitting graphs first introduced in~\cite{PP_SparseUniversal}. This will allow us later to obtain $r$-uniform universal hypergraphs out of universal hypergraphs of smaller uniformity. 

Let $r \ge 3$ and $2 \le s <r$ be integers.
Given two $s$-uniform hypergraphs $G$ and $F$ and an $r$-uniform hypergraph $H$, we say that $G$ \emph{hits $H$ on $F$} if for all edges $f \in E(H)$ there is a copy of $F$ in $G$ induced on $f$, i.e.\ in $G[f]$. 
A family of $s$-uniform hypergraphs $\cG$ \emph{hits} a family of $r$-uniform hypegraphs $\cF$ \emph{on} $F$ if for every $H \in \cF$ there is an $G \in \cG$ such that $G$ hits $H$ on $F$. 

This concept allows us to reduce the uniformity from $r$ to $s$ keeping at the same time much of the information about $H$. 
This motivates a definition that allows us to recover all the edges of the hypergraph $H$ which is being hit by $G$ on $F$. For given $s$-uniform hypergraphs  $G$ and $F$ let $\cH_{(F,r)}(G)$ be the $r$-uniform hypergraph  on the vertex set $V(G)$ whose edges $f \in \binom{V(G)}{r}$ are such that a copy of $F$ is contained in $G[f]$. 

The following lemma establishes the connection between hitting hypergraphs and $\cH_{(F,r)}(G)$. It is an extension of Lemma~5.2 from~\cite{PP_SparseUniversal}. For completeness we include its easy proof.
 \begin{lemma}
 \label{lem:Hruni}
 Let $r>s \ge 2$, $\Delta \ge 1$ be integers and $F$ be an $s$-uniform hypergraph on at most $r$ vertices.
 Further let $\cF$ be a family of $r$-uniform hypergraphs and $\cG$ a family of $s$-uniform hypergraphs hitting $\cF$ on $F$.
 If $G'$ is a $\cG$-universal $s$-uniform hypergraph, then $\cH_{(F,r)}(G')$ is $\cF$-universal.
 \end{lemma}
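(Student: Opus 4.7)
The plan is a straightforward chase through the two given properties. Take an arbitrary $H \in \cF$; to prove the lemma I need to exhibit a copy of $H$ inside $\cH_{(F,r)}(G')$, i.e.\ an injection $V(H)\to V(G')$ mapping every edge of $H$ to an edge of $\cH_{(F,r)}(G')$.

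First I would invoke the hitting hypothesis on $H$: since $\cG$ hits $\cF$ on $F$, there is some $G \in \cG$ (with $V(H) \subseteq V(G)$) such that for every $f \in E(H)$ the induced $s$-uniform subhypergraph $G[f]$ contains a copy of $F$. Next I would invoke the universality of $G'$: there exists an injection $\phi\colon V(G) \to V(G')$ with $\phi(e) \in E(G')$ for every $e\in E(G)$, so its restriction to $V(H)$ is also an injection into $V(G')$.

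The substantive step is then to verify that $\phi|_{V(H)}$ embeds $H$ into $\cH_{(F,r)}(G')$. Fix any $f \in E(H)$; by the previous paragraph $G[f]$ contains a copy of $F$. Since $\phi$ is injective and sends edges of $G$ to edges of $G'$, composing the inclusion of this copy of $F$ with $\phi$ produces a copy of $F$ inside $G'[\phi(f)]$. By the definition of $\cH_{(F,r)}(G')$, this means $\phi(f) \in E(\cH_{(F,r)}(G'))$. Thus $\phi|_{V(H)}$ is the desired embedding. The only thing to be careful about is that a copy of $F$ in $G[f]$ is transported, as a copy of $F$, into $G'[\phi(f)]$ — but this is immediate because $\phi$ is edge-preserving and injective, so there is really no technical obstacle; the lemma is a bookkeeping statement formalising the reduction from $r$-uniform to $s$-uniform universality via the hitting-hypergraph construction.
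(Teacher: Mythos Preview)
Your argument is correct and is essentially identical to the paper's proof: pick $H\in\cF$, take a hitting $G\in\cG$, embed $G$ into $G'$ via universality, and verify that the embedding sends each $f\in E(H)$ to an edge of $\cH_{(F,r)}(G')$ because the copy of $F$ in $G[f]$ is carried to a copy of $F$ in $G'[\phi(f)]$. The only cosmetic difference is that you restrict to $\phi|_{V(H)}$, whereas the paper simply treats $\varphi$ as acting on $V(H)$ directly.
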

 \begin{proof}
  Let $H\in \cF$ be an $r$-uniform hypergraph together with the hypergraph $G \in \cG$ that hits $H$ on $F$. Since $G'$ is $\cG$-universal, there exists an embedding $\varphi\colon V(G)\to V(G')$ of $G$ into $G'$.

It is now easy to see that $\varphi$ is an embedding of $H$ into $\cH_{F,r}(G')$, and thus, $\cH_{F,r}(G')$ is $\cF$-universal. This can be seen as follows. For any edge $f\in E(H)$ there is a copy of $F$ in $G[f]$. Since $\varphi$ is an embedding of $G$ into $G'$, there is a copy of $F$ in $G'[\varphi(f)]$. By the definition of $\cH_{F,r}(G')$, $\varphi(f)$ is a hyperedge in $\cH_{F,r}(G')$. Thus, $\varphi$ is an embedding of $H$ into $\cH_{F,r}(G')$.
 \end{proof}

The lemma above suggests a way of obtaining $r$-universal hypergraphs out of hypergraphs of smaller uniformity. This will be exploited for particular choices of $F$ in the following sections.

\section{Proof of Theorem~\ref{theom:main}}
\label{sec:proof1}
In this section we provide proofs of the cases~\eqref{case:prime_case}--\eqref{case:odd_case} of Theorem~\ref{theom:main}.
\subsection{\texorpdfstring{Proofs of~\eqref{case:prime_case} and~\eqref{case:even_case}}{Proofs of (1) and (2)}}
\label{sec:primecase}
Let $r>r'\ge 2$ and $\Delta\ge 1$ be integers such that $r'\mid r$. We take $F$ to be the $r'$-uniform perfect matching on $r$ vertices (and thus with $r/r'$ edges). Let $H\in \FnD$. Since every vertex lies in at most $\Delta$ edges there is a graph $H^\prime \in \cF^{(r^\prime)}(n,\Delta)$ hitting $H$ on $F$. Such an $H'$ can be obtained from $H$ by replacing every edge $f$ of $H$ with an arbitrary perfect $r'$-uniform matching  on $f$. Therefore, $\cF^{(r^\prime)}(n,\Delta)$ hits $\FnD$ on $F$.

Now if $G'$ is an $\cF^{(r^\prime)} (n,\Delta)$-universal hypergraph then, by Lemma~\ref{lem:Hruni}, $\cH_{(F,r)}(G')$ is $\FnD$-universal. Moreover, since any collection of $r/r^\prime$ independent edges from $G'$ forms an $r$-edge in $\cH_{(F,r)}(G')$, we have $e(\cH_{(F,r)}(G')) \le e(G)^{r/r^\prime}$.

If there exists an $\mathcal{F}^{(r')}(n,\Delta)$-universal hypergraph with $O(n)$ vertices and $O(n^{r'-r'/\Delta})$ edges, then we immediately obtain an $\FnD$-universal hypergraph on $O(n)$ vertices and with 
\begin{align*}
O\left( (n^{r^\prime-r^\prime/\Delta})^{r/r^\prime} \right) = O(n^{r-r/\Delta})
\end{align*}
edges. 

By Theorem~\ref{theom:AC} there exist optimal explicitly constructible $\cF^{(2)}(n,\Delta)$-universal graphs on $O(n)$ vertices with $O(n^{2-2/\Delta})$ edges. This yields for even $r$ an explicitly constructible optimal $\FnD$-universal hypergraph with $O(n^{r-r/\Delta})$ edges. A similar argument applies also for the case of explicitly constructible $\cF^{(2)}(n,\Delta)$-universal graphs  on $n$ vertices with  $O(n^{2-2/\Delta} \ln^{4/\Delta} n)$  edges, giving $\FnD$-universal hypergraphs  on $n$ vertices with $O (n^{r-r/\Delta} \ln^{2r/\Delta}(n) )$ edges.

We remark, that obtaining $\cF^{(r')}(n,\Delta)$-universal hypergraphs on $O(n)$ vertices with $O(n^{r'-r'/\Delta})$ edges for $r'$ being prime would provide then the conjectured optimal upper bound $O(n^{r-r/\Delta})$ for all $r$ and $\Delta$.
%
%
%

\subsection{\texorpdfstring{Proof of~\eqref{case:odd_case}: finding low degree hitting graphs}{Proof of (3): finding low degree hitting graphs}}
\label{sec:lowdegree}
In the case when $r$ is odd, our hitting $r'$-uniform hypergraphs will be simply graphs, i.e.\ $r^\prime=2$. Moreover, the graph $F$ can no longer be perfect matching, and thus we take $F$ as the disjoint union of a matching on $r-3$ vertices and a path $P_3$ of length $2$, i.e.\ a path with $2$ edges. We remark, that the cases when $F=K_2$ (a single edge) and $F=K_r$  were considered in~\cite{PP_SparseUniversal}. We use the following lemma which asserts that one can find a family of graphs with not too large maximum degree which hits $\FnD$ on $F$.
\begin{lemma}
\label{lem:lowdegree}
Let $r \ge 3$ be odd  and $\Delta \ge 1$ be integers. Let $F$ be the disjoint union of a matching on $r-3$ vertices and a path $P_3$.  
Then $\cF^{(2)}(n,\lceil (r+1)\Delta/r \rceil)$ hits $\FnD$ on $F$.
\end{lemma}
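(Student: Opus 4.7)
The plan is as follows. For each $H\in\FnD$ I aim to construct a graph $H'$ with $\Delta(H')\le\Delta':=\lceil(r+1)\Delta/r\rceil$ such that $H'[f]$ contains a copy of $F$ for every hyperedge $f\in E(H)$. Since $r$ is odd, $r-3$ is even, so $F$ has exactly $(r-3)/2+2=(r+1)/2$ edges on its $r$ vertices; crucially, exactly one vertex of $F$, namely the middle vertex of the $P_3$, has degree $2$, while every other vertex has degree $1$. Specifying a copy of $F$ inside a hyperedge $f$ therefore reduces to picking a \emph{center} $c(f)\in f$, choosing any two of the remaining $r-1$ vertices of $f$ as its $P_3$-neighbors, and placing an arbitrary perfect matching on the $r-3$ leftover vertices of $f$.

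Under this recipe, the contribution of $f$ to $\deg_{H'}(v)$ for $v\in f$ is $2$ if $v=c(f)$ and $1$ otherwise. Writing $t(v):=\bigl|\{f\ni v\colon c(f)=v\}\bigr|$, I obtain
\[
  \deg_{H'}(v)\;=\;\deg_H(v)+t(v)\;\le\;\Delta+t(v).
\]
As $\Delta\in\mathbb{Z}$, one has $\Delta'=\Delta+\lceil\Delta/r\rceil$, so it is enough to exhibit an assignment $c\colon E(H)\to V(H)$ with $c(f)\in f$ for every $f$ and $t(v)\le\lceil\Delta/r\rceil$ for every $v$.

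The existence of such a $c$ is a Hall-type statement, and I expect this to be the only step with any real content. Setting $k:=\lceil\Delta/r\rceil$, let $B$ be the bipartite graph between $E(H)$ and $V(H)\times[k]$ in which $f$ is joined to every $(v,i)$ with $v\in f$; a matching of $B$ saturating $E(H)$ corresponds exactly to the desired map. To verify Hall's condition, fix $A\subseteq E(H)$ and put $V(A):=\bigcup_{f\in A}f$. Counting vertex--edge incidences inside $A$ in two ways gives $r|A|\le\Delta\,|V(A)|$, since each of the $|A|$ edges contributes $r$ vertices to $V(A)$ and each such vertex lies in at most $\Delta$ edges of $H$. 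Hence
\[
  |N_B(A)|\;=\;k\,|V(A)|\;\ge\;\tfrac{\Delta}{r}|V(A)|\;\ge\;|A|,
\]
as required. Once such a $c$ is produced, adding the $(r+1)/2$ edges prescribed by the recipe above for every hyperedge of $H$ yields a graph $H'\in\cF^{(2)}(n,\Delta')$ that hits $H$ on $F$, completing the proof of the lemma.
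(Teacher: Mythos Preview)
Your argument is correct and essentially identical to the paper's: both set up the same bipartite graph between $E(H)$ and $\lceil\Delta/r\rceil$ copies of $V(H)$, invoke Hall's theorem to choose a center $c(f)\in f$ for each hyperedge used at most $\lceil\Delta/r\rceil$ times per vertex, and then place $F$ with that vertex as the middle of the $P_3$. The only cosmetic differences are that the paper verifies Hall's condition by comparing degrees on the two sides while you double-count incidences, and your displayed equality $\deg_{H'}(v)=\deg_H(v)+t(v)$ should strictly be $\le$ (distinct hyperedges may contribute the same graph edge), but this does not affect the bound.
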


\begin{proof}
Let $H \in \FnD$.  
 One defines an auxiliary bipartite incidence graph $B$ as follows. 
 The first class $V_1$ consists of  $\lceil \Delta / r \rceil$ copies of $V(H)$ and the second class  $V_2$ is equal to $E(H)$, while an edge of $B$  corresponds to a pair $(v,f)$, where  $v$ is some copy of a vertex from $V(H)$ and $f \in E(H)$ such that  $v \in f$.
The vertices in $V_1$ have degree at most  $\Delta$ and every hyperedge is connected to all $\lceil \Delta/r \rceil$ copies of its $r$ vertices, i.e.\ the vertices from $V_2$ have degree $r \lceil \Delta/r \rceil\ge \Delta$. By Hall's condition,  there is then a matching $M$ covering $V_2$ and thus of size $e(H)$.

We build the hitting graph $H^\prime$ on the vertex set $V(H)$ by replacing edges $f\in E(H)$ through copies of $F$ as follows. 
 For every edge $f$ in $E(H)$ we use the edge $(v,f)$ of the matching $M$ and place a copy of $F$ on $f$ such that the vertex $v$ is  the degree $2$ vertex of the path $P_3$ from $F$ while the other vertices are placed on $f\setminus\{v\}$ arbitrary. Since there are $\lceil \Delta/r \rceil$ copies of every vertex $v$ and every vertex $v$ lies in at most $\Delta$ edges of $H$, we see that each `placed' copy of $F$ that contains $v$ contributes  $1$ or $2$ to $\deg_{H'}(v)$. Thus,  the contribution from the copies of $F$ to the degree of $v$ in $H'$ is at most $\Delta + \left\lceil \frac{\Delta}{r} \right\rceil$ and therefore $\Delta(H')\le \lceil (r+1)\Delta/r \rceil$. This implies  $H^\prime \in \cF^{(2)}(n,\lceil (r+1)\Delta/r \rceil)$.
\end{proof}

For any $\cF^{(2)}(n,\lceil (r+1)\Delta/r \rceil)$-universal graph $G$ we get with Lemma~\ref{lem:Hruni} an $\FnD$-universal hypergraph $H$ with at most $2E(G)^{(r-1)/2} \Delta(E(G))$ many edges on the same number of vertices. The maximum degree of universal graphs $G$ in the constructions of Alon and Capalbo from Theorem~\ref{theom:AC} is  $O(|E(G)|/|V(G)|)$, and thus we obtain the case~\eqref{case:odd_case} of Theorem~\ref{theom:main} with $\cF^{(2)}(n,\lceil (r+1)\Delta/r \rceil)$-universal graph $G$ on $O(n)$ vertices with $O(n^{2-2/\lceil (r+1)\Delta/r \rceil})$ edges 
since
\begin{align*}
O\left((n^{2-2/\lceil (r+1)\Delta/r \rceil})^{(r-1)/2} \cdot n^{1-2/\lceil (r+1)\Delta/r \rceil}\right) = 
O\left(n^{r-(r+1)/\lceil (r+1)\Delta/r \rceil}\right). 
\end{align*}
A similar calculation yields $\FnD$-universal hypergraphs on $n$ vertices with 
\[O (n^{r-(r+1)/ \lceil (r+1)\Delta/r \rceil} \ln^{2(r+1)/\lceil (r+1)\Delta/r \rceil}n )\] edges, which we obtain from $\cF^{(2)}(n,\lceil (r+1)\Delta/r \rceil)$-universal graphs $G$ on $n$ vertices with $O(n^{2-2/\lceil (r+1)\Delta/r \rceil}\log^{4/\lceil (r+1)\Delta/r \rceil} n)$ edges.

\begin{remark}
In contrary to the $F$ chosen as a matching plus $P_3$ we could work with any forest $F$. 
For example, if $F$ is the path $P_r$ on $r$ vertices one can show that $\cF^{(2)}(n,\lceil 2 (r-1) \Delta/r\rceil)$ hits $\FnD$ on $F$. This leads to an $\FnD$-universal hypergraph on $O(n)$ vertices with  $O(n^{r-2(r-1)/\lceil 2 (r-1) \Delta/r\rceil})$ edges. It depends on the values of $r$ and $\Delta$, which bound is better, but one does not get anything significantly  better than $O\left(n^{r-(r+1)/\lceil (r+1)\Delta/r \rceil}\right)$ edges and therefore  we do not further pursue this here. 
\end{remark}

\subsection{Reducing the number of vertices}

Note that it is possible to reduce the number of vertices from $O(n)$ to $(1 + \eps) n$ in Theorems \ref{theom:AC}, \ref{theom:main} and \ref{theom:sec}, for any fixed $\eps>0$, by using a \emph{concentrator} as was done in \cite{ACKRRS01}.
 Consider the $\FnD$-universal hypergraph $H$ on $O(n)$ vertices and with $m$ edges. 
 A \emph{concentrator} is a bipartite graph $C$ on the vertex sets $V(H)$ and $Q$, where $|Q|=(1+\eps)n$ such that for every $S \subseteq V(H)$ with $|S| \le n$ we have $|N(S)| \ge |S|$ and every vertex from $V(H)$ has $O_\eps(1)$ neighbours in $C$.  We define a new hypergraph $H^\prime$ on $Q$ by taking all sets $f' \in \binom{Q}{r}$ as edges for which there exists a perfect matching in $C$ from an edge $f \in E(H)$ to $f'$. Since every vertex from $V(H)$ has $O_\eps(1)$ degree in $C$, the hypergraph $H'$ has $O_\eps(m)$ edges. It is also not difficult to see that $H'$ is $\FnD$-universal. Indeed, let $F\in\FnD$ and let $\varphi\colon V(F)\to V(H)$ be its embedding into $H$. By the property of the concentrator $C$,  there is a matching of $\varphi(V(F))$ in $C$ which we can describe by an injection $\psi\colon \varphi(V(F))\to V(H')$. But now, by construction of $H'$, $\psi\circ\varphi$ is an embedding of $F$ into $H'$.
 
\section{Proof of Theorem \ref{theom:sec}}
\label{sec:proof2}

At this point in all cases where $r$ is not even and $r$ does not divide $\Delta$ we do not have constructions of $\FnD$-universal hypergraphs that match the lower bound $\Omega(n^{r-r/\Delta})$ on the number of edges. In this section we will deal with the 'smallest' open case $\Delta=2$ by constructing optimal $\cF^{(r)}(n,2)$-universal hypergraphs on $O(n)$ vertices with $O(n^{r/2})$ edges. So, for example, if $r=3$ then Theorem~\ref{theom:main}, case~\eqref{case:odd_case} yields $\cF^{(3)}(n,2)$-universal hypergraphs on $O(n)$ vertices with $O(n^{3-4/\lceil8/3\rceil})=O(n^{5/3})$ edges, while the lower bound is  $\Omega (n^{3/2})$. 

We will first deal with the case $r=3$ and $\Delta=2$ and we reduce the case of general $r$ and $\Delta=2$ to this one.
Let us say a few words how an improvement from $O(n^{5/3})$ to $O(n^{3/2})$ can be accomplished.  We will use the concept of a graph $G$ that hits some hypergraph $H$ on $P_3$ (the path on $3$ vertices). If we would follow the arguments in the previous section, then we see that taking a hypergraph $H\in \cF^{({3})}(n,2)$ and replacing every hyperedge by $P_3$ we can obtain a hitting graph $F$ of maximum degree $3$ and of average degree $8/3$. Thus, if we would like to use Theorem~\ref{theom:AC} we need to consider $\cF^{(2)}(n,3)$-universal graphs, which results in the loss of some $n^{1/6}$-factor in the edge density. Instead, we will seek to decompose the hitting graph $F$ into appropriate subgraphs $F_1$, $F_2$, $F_3$ and $F_4$ such that every edge of $F$ lies in \emph{exactly} three of the graphs $F_i$. A decomposition result of Alon and Capalbo from~\cite{AC07} will assist us in this. 
Finally, following closely the arguments again due to Alon and Capalbo but now from~\cite{AC08} will allow us to construct a universal graph $G$ on $O(n)$ vertices and with maximum degree $O(n^{1/4})$ for a carefully chosen family $\cF'$ of graphs allowing a decomposition as above, which hits $\cF^{({3})}(n,2)$ on $P_3$. 
Lemma~\ref{lem:Hruni} implies then that $\cH_{P_3,3}(G)$ is $\cF^{({3})}(n,2)$-universal and has $O(n^{3/2})$ edges.

\subsection{A graph decomposition result}
The following notation is from~\cite{AC07}. 
Let $F$ be a graph and $S\subseteq V(F)$ be a subset of its vertices. A graph $F'$ which is obtained from $F$ by adding additionally $|S|$ new vertices to $F$ and placing an (arbitrary) matching between these new vertices and the vertices from $S$ is called an \emph{augmentation} of $F$. We call a graph \emph{thin} if every of its components is an augmentation of a path or a cycle, or if they contain at most two vertices of degree $3$. We also call any subgraph of a thin graph thin. 

The following decomposition theorem may be seen as a generalization of Petersen's theorem to graphs of odd degree. It was proved  in~\cite[Theorem~3.1]{AC07}.
\begin{theorem}
\label{lem:ACdecomposition}
Let  $\Delta$ be an integer and $F$ a graph with maximum degree $\Delta$. Then there are $\Delta$ spanning subgraphs $F_1,\dots,F_{\Delta}$ such that each $F_i$ is thin and every edge of $F$ appears in  precisely two graphs $F_i$.
\end{theorem}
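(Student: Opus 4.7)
The plan is to reduce to the regular case and then apply Petersen's classical 2-factor theorem, handling the parity of $\Delta$ separately. First, I would embed $F$ as a spanning subgraph of some $\Delta$-regular simple graph $\tilde{F}$ by the standard construction of taking several disjoint copies of $F$ and adding edges between corresponding vertices to bring every vertex up to degree $\Delta$. Since the definition in the excerpt explicitly declares subgraphs of thin graphs to be thin, any thin double-cover of $\tilde{F}$ restricts to a thin double-cover of $F$, so it suffices to prove the statement for $\tilde{F}$.

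For $\Delta$ even, Petersen's theorem decomposes the $\Delta$-regular graph $\tilde{F}$ into $\Delta/2$ edge-disjoint spanning $2$-factors $C_1, \dots, C_{\Delta/2}$. Each $C_i$ is a disjoint union of cycles, hence thin. Listing each $C_i$ twice in a sequence of length $\Delta$ immediately yields $\Delta$ thin spanning subgraphs in which every edge lies in precisely two of them.

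For $\Delta$ odd, I would form the multigraph $2\tilde{F}$ by doubling every edge, which is $2\Delta$-regular, and apply Petersen to obtain $\Delta$ edge-disjoint spanning $2$-factors $D_1, \dots, D_\Delta$ of $2\tilde{F}$. If one can arrange that no $D_i$ contains both parallel copies of any edge of $\tilde{F}$, then $F_i := D_i$, viewed as a simple subgraph of $\tilde{F}$, is a disjoint union of cycles (hence thin), and each edge of $\tilde{F}$ belongs to exactly two of the $F_i$. To secure such an arrangement I would run an exchange argument: whenever some $D_i$ contains a two-vertex double-edge component $\{u,v\}$, locate another factor $D_j$ whose $u$- and $v$-components can be rerouted through $\{u,v\}$ by a symmetric-difference swap, thereby strictly reducing the total number of double-edge components.

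The main obstacle is ensuring that the exchange step always succeeds without introducing fresh double-edge components. Here the permissive definition of \emph{thin} — allowing components to be augmentations of paths or cycles, or to contain up to two vertices of degree $3$ — is what gives the argument the room it needs: one need not insist that each $F_i$ be a strict $2$-factor, so residual parallel edges can be absorbed as pendant edges of an augmentation attached to a cycle or path in an adjacent $F_j$, exactly matching the augmentation clause. Setting up a potential function that tracks both the number of double-edge components and local violations of thinness, and showing that a carefully chosen local swap always decreases it, is the delicate combinatorial heart of the odd case; I expect this to be the step requiring the most technical care.
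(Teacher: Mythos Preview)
Your even-$\Delta$ reduction via Petersen's theorem is correct and standard. The odd case, however, runs into a genuine obstruction rather than a mere technicality in the exchange step. Take $\Delta=3$ and $\tilde F$ the Petersen graph (which is already $3$-regular, so there is no freedom in choosing $\tilde F$). Your goal is three spanning \emph{simple} $2$-regular subgraphs $D_1,D_2,D_3$ of $\tilde F$ with every edge in exactly two of them. Setting $M_i:=E(\tilde F)\setminus E(D_i)$, each $M_i$ is $1$-regular, hence a perfect matching, and since every edge lies in exactly two of the $D_j$ it lies in exactly one $M_i$; thus $M_1,M_2,M_3$ would form a proper $3$-edge-colouring of the Petersen graph, which has chromatic index~$4$. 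So no digon-free $2$-factorisation of $2\tilde F$ exists, and your exchange procedure cannot terminate because its target configuration is impossible. The same obstruction appears over any snark.

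This is precisely why \emph{thin} permits components with up to two vertices of degree~$3$: the statement is false if one insists on honest $2$-factors. The paper does not prove the theorem itself but cites Alon--Capalbo~\cite{AC07}, recording that the odd case is handled through the Gallai--Edmonds decomposition via Lemma~\ref{lem:ACmatching}, which directly extracts from an odd-regular graph a thin spanning subgraph with all degrees in $\{2,3\}$ and at most two degree-$3$ vertices per component. Your instinct that the slack in the definition is essential is correct, but the doubling-plus-Petersen route never reaches it; the degree-$3$ defects must be built into the construction from the outset, as Lemma~\ref{lem:ACmatching} does, rather than hoped for as a by-product of a failed $2$-factor exchange.
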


Its proof is built on the Gallai-Edmonds decomposition theorem, and is implied by the following lemma.
\begin{lemma}[Lemma 3.3 from \cite{AC07}]
\label{lem:ACmatching}
Let $\Delta \ge 3$ be an odd integer and $H$ a $\Delta$-regular graph. Then $H$ contains a spanning subgraph in which every vertex has degree $2$ or $3$ and every connected component has at most $2$ vertices of degree $3$.
\end{lemma}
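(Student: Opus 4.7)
The plan is to combine the Gallai--Edmonds decomposition of maximum matchings with Petersen's classical 2-factor theorem (every $2k$-regular graph contains a 2-factor). First I would dispose of the easy case: if $H$ admits a perfect matching $M$, then $H\setminus M$ is a $(\Delta-1)$-regular graph with $\Delta-1$ even, so Petersen's theorem yields a 2-factor $F\subseteq H\setminus M$; this $F$ is spanning and 2-regular, so every vertex has degree $2$ and every connected component is a single cycle, in particular with zero vertices of degree $3$, trivially satisfying the conclusion.

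When $H$ has no perfect matching, let $V(H)=A\cup C\cup D$ be its Gallai--Edmonds decomposition: $H[C]$ admits a perfect matching, each component of $H[D]$ is factor-critical, and any maximum matching $M$ of $H$ perfectly matches $C$, injects $A$ into distinct components of $H[D]$, and restricts to a near-perfect matching on every component of $D$. On $H[C]$, I would extract a spanning 2-regular subgraph by first peeling off a perfect matching of $H[C]$ and then applying a Petersen-style 2-factor argument to the remaining even-degree graph. On each factor-critical component $D_j$ of $H[D]$, I would use an odd-ear decomposition rooted at the vertex $v_j\in D_j$ that is matched to $A$ by $M$; from this decomposition one extracts a spanning subgraph of $D_j$ which is either a cycle or a theta/handcuff graph with exactly two vertices of degree $3$ (a parity check via handshaking shows that the number of odd-degree vertices per component is necessarily even, so $0$ or $2$). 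Finally, including the $A$--$D$ matching edges of $M$ glues these local pieces together into a spanning subgraph of $H$ with all degrees in $\{2,3\}$.

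For odd $\Delta\ge 5$, the plan is to induct on $\Delta$: if one can peel off a 2-factor (for instance by applying Petersen to an even-regular spanning subgraph obtainable inside $H$), this reduces to odd degree $\Delta-2$; otherwise one adapts the Gallai--Edmonds argument directly with degree bookkeeping. The main obstacle lies in the non-perfect-matching case: a single vertex $a\in A$ may end up adjacent in the final subgraph to several pieces coming from different components of $D$, and one must ensure that, after all the gluings, every connected component of the output still contains at most two vertices of degree $3$. The fix is to choose, for each factor-critical $D_j$, both the near-perfect matching and the ear decomposition so that the root $v_j$ coincides with the endpoint of the $A$--$D_j$ matching edge; in this way the degree-$3$ defects are channeled into exactly the endpoints of the $A$--$D$ matching edges and can be arranged to contribute at most two per merged component, which is precisely the form demanded by the lemma.
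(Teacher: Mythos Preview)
The paper does not actually prove this lemma; it is quoted verbatim as Lemma~3.3 from Alon--Capalbo~\cite{AC07} and used as a black box. The only hint the paper gives is the sentence preceding it, namely that the proof of the surrounding decomposition theorem ``is built on the Gallai--Edmonds decomposition theorem,'' so your choice of Gallai--Edmonds together with Petersen is the intended toolkit. That said, your outline in the non-perfect-matching case has real gaps that would need to be filled before it becomes a proof.

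First, your treatment of $C$ does not work as written. The induced graph $H[C]$ is \emph{not} $\Delta$-regular: vertices of $C$ may send edges to $A$, so after you peel off a perfect matching of $H[C]$ the leftover is not an even-degree graph and Petersen's theorem does not apply. Second, the vertices of $A$ are essentially ignored. Your plan hands each $a\in A$ a single $A$--$D$ matching edge, but that leaves $\deg(a)=1$ in the output; you never say where the remaining one or two edges at $a$ come from, and since $a$ may be adjacent to several $D$-components (and to $C$), any extra edge you add can merge pieces and create additional degree-$3$ vertices in a way your final paragraph does not control. Third, the induction for odd $\Delta\ge 5$ is not sound: an odd-regular graph need not contain a $2$-factor (any cubic graph with a bridge is a counterexample), so ``peel off a $2$-factor and reduce to $\Delta-2$'' is not available in general. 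The actual argument in~\cite{AC07} runs the Gallai--Edmonds analysis once, removes a maximum matching to obtain a graph whose vertices have degree $\Delta-1$ or $\Delta$ with at most one degree-$\Delta$ vertex per component, and then repeatedly strips $2$-factors from an Eulerian supergraph; you would need to reorganise your plan along these lines, handling $A$, $C$, and the unmatched $D$-components simultaneously rather than piecewise.
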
 

We will use the two results above to prove the existence of a hitting graph $F$ with nice properties so that we can later take advantage of them when constructing a universal graph for the family of such `nice' hitting graphs.

\begin{lemma}
\label{lem:hitting}
Let $H\in \cF^{(3)}(n,2)$. Then there exists a graph $F$ that hits $H$ on $P_3$ with the following properties:
\begin{enumerate}
\item there are spanning subgraphs $F_1$, $F_2$, $F_3$ and $F_4$ of $F$ such that every $F_i$ is an augmentation of a thin graph, and\label{hit_three_one}
\item  every edge lies in \emph{exactly} three of the $F_i$.\label{hit_three_two}
\end{enumerate}
\end{lemma}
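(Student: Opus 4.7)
The plan is to obtain a hitting graph $F$ of maximum degree $3$ via Lemma~\ref{lem:lowdegree}, and then decompose it using the two Alon--Capalbo tools at our disposal: Lemma~\ref{lem:ACmatching} and Theorem~\ref{lem:ACdecomposition}.

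First I apply Lemma~\ref{lem:lowdegree} to $H$ with $r=3$, $\Delta=2$: this produces a graph $F$ on $V(H)$ that hits $H$ on $P_3$ with $\Delta(F)\le \lceil 8/3\rceil = 3$. Since Lemma~\ref{lem:ACmatching} requires a $3$-regular host graph, I extend $F$ to a $3$-regular graph by adding dummy edges and vertices in a way that does not interfere with the hitting property (for instance, by attaching the low-degree vertices of $F$ to disjoint $3$-regular auxiliary gadgets); these dummies are tracked through the construction and discarded in the end. Lemma~\ref{lem:ACmatching} applied to the extended graph then produces a spanning subgraph $F'\subseteq F$ in which every vertex has degree $2$ or $3$ and each component contains at most two degree-$3$ vertices; thus $F'$ is thin, and the complement $M := F\setminus F'$ is a perfect matching on the degree-$2$ vertices of $F'$. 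Theorem~\ref{lem:ACdecomposition} applied to $F'$ (which has maximum degree $3$) then yields three thin spanning subgraphs $G_1, G_2, G_3\subseteq F'$ such that each edge of $F'$ lies in exactly two of them.

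I then set $F_i := G_i\cup M$ for $i=1,2,3$ and $F_4 := F'$. The edge-count condition~\eqref{hit_three_two} is immediate: each edge of $F'$ is in $F_4$ and in exactly two of $F_1, F_2, F_3$, while each edge of $M$ is in all three of $F_1, F_2, F_3$ but not in $F_4 = F'$; in both cases the edge lies in exactly three of the $F_i$. Condition~\eqref{hit_three_one} for $F_4 = F'$ is immediate since $F'$ is thin.

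The hard part will be to verify condition~\eqref{hit_three_one} for each $F_i = G_i\cup M$ with $i\in\{1,2,3\}$. I plan to designate a pendant set $N_i\subseteq V(F)$ as follows: for every $M$-edge $\{u,v\}$ with at least one endpoint isolated in $G_i$, put exactly one such isolated endpoint into $N_i$. Each vertex in $N_i$ then has degree $1$ in $F_i$, and the $M$-edges incident to $N_i$ form a matching from $N_i$ into its complement. The residual core, $F_i$ restricted to $V(F)\setminus N_i$, equals $G_i$ together with those $M$-edges whose endpoints both have positive $G_i$-degree. Proving that this core is thin is the main obstacle: the argument must combine the bound of at most two degree-$3$ vertices per component of $G_i$ (inherited from Theorem~\ref{lem:ACdecomposition}) with the special structure of $M$, namely that $M$ connects only degree-$2$ vertices of $F'$. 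This should ensure that the matching $M$ cannot produce too many new degree-$3$ vertices inside a single component of the core, and that any local deviations from the ``$\le 2$ degree-$3$ vertices per component'' condition are absorbed by the \emph{augmentation-of-a-path-or-cycle} clause of the definition of thin.
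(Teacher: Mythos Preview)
There is a genuine gap in the final step, and the vague plan you sketch there does not work. The difficulty is exactly the case you brush aside: $M$-edges both of whose endpoints have positive $G_i$-degree. Nothing in your construction prevents many such edges from landing inside a single component of $G_i$. Concretely, suppose (after your $3$-regular extension) $F$ contains a copy of $K_{3,3}$. Lemma~\ref{lem:ACmatching} may return the Hamiltonian $6$-cycle $F'=C_6$ on those six vertices, leaving $M$ as the complementary perfect matching; applying Theorem~\ref{lem:ACdecomposition} with $\Delta=3$ to $C_6$ may legitimately output $G_1=G_2=C_6$, $G_3=\emptyset$. Then $F_1=G_1\cup M=K_{3,3}$, a connected graph with six vertices of degree $3$ and no pendant vertices, which is neither thin nor an augmentation of a thin graph. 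Your pendant set $N_1$ is empty here, so your ``core'' is all of $K_{3,3}$. The heuristic that ``$M$ connects only degree-$2$ vertices of $F'$'' gives no control over degrees in $G_i$, and the augmentation clause cannot absorb three independent chords of a cycle.

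The paper avoids this by two changes you are missing. First, it does \emph{not} use a generic hitting graph from Lemma~\ref{lem:lowdegree}; it applies Lemma~\ref{lem:ACmatching} to the line graph $H^*$ (not to $F$), and uses the resulting matching $M^*$ in $H^*$ to steer the placement of the $P_3$'s so that a designated vertex set $D$ has degree exactly $2$ in $F$ and $F\setminus M$ is already an augmentation of a thin graph for a matching $M$ saturating $D$. Second, and crucially, it \emph{contracts} $M$ before invoking Theorem~\ref{lem:ACdecomposition}: the thin decomposition is taken of $F/M$, and only afterwards is $M$ reinserted by decontraction. This guarantees that each $M$-edge sits inside the thin structure of every $F_i$ rather than being added as a chord on top of it, and is precisely the missing idea in your approach.
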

\begin{proof}
Let $H \in \cF^{(3)}(n,2)$. We assume first that $H$ is linear, i.e.\ edges are always intersecting in at most one vertex. 
Further we assume that $H$ is $2$-regular (otherwise we add `dummy' vertices and edges and obtain a $2$-regular hypergraph, and, once the desired graph $F$ is constructed, we delete these dummy vertices from $F$). 

The rough outline of the proof is to find a graph $F$ that hits $H$ on $P_3$ and such that $F$ contains a matching $M$ so that $F\setminus M$ is an augmentation of a thin graph and if we contract the matching edges from $M$ in $F$ we obtain a graph of maximum degree at most $3$. Decomposing such contracted graph via Theorem~\ref{lem:ACdecomposition} into thin graphs $F'_1$, $F'_2$ and $F'_3$ and then `recontracting' edges yields the desired family $F_1$,\ldots, $F_4$.


Let $H^*$ be the line graph of $H$, that is $V(H^*)=E(H)$ and $e\neq f\in E(H)$ form an edge $ef$  in $H^*$ if $e\cap f\neq\emptyset$. Thus, $H^*$ is a $3$-regular graph on $2n/3$ vertices.  Lemma \ref{lem:ACmatching} asserts then the existence of  a matching $M^*$ in $H^*$ such that in $H^* \setminus M^*$ every component has at most $2$ vertices of degree $3$ and all other vertices have degree $2$. Such a decomposition implies thus that every component of $H^* \setminus M^*$ is either a cycle, or has exactly two vertices, say $a$ and $b$, of degree $3$, so that either there are $3$ internally vertex-disjoint paths between $a$ and $b$ or there is one path between $a$ and $b$ and, additionally, $a$ and $b$ lie on vertex-disjoint cycles (which also do not contain inner vertices from the path between $a$ and $b$). 

From the  matching $M^*$ we define 
a subset $D:=\left\{v\colon e\cap f=\{v\} \text{ where }  ef\in E(M^*)\right\}$. Since $\Delta(H)\le 2$ and $M^*$ is a matching in the line graph of $H$ it follows that no two vertices from $D$ lie in an edge from  $H$.

We denote by $H_D$ the hypergraph which we obtain from $H$ if we delete from the edges of $H$ the vertices in $D$ but we keep the edges, obtaining thus a hypergraph on the vertex set  $V(H)\setminus D$, whose edges have cardinality $2$ or $3$. Thus, if $ef$ is an edge in $H^*$ and $e\cap f=\{v\}$ then the deletion of $v$ from $e$ and $f$ implies that the edges $e\setminus\{v\}$ and $f\setminus\{v\}$ are no longer adjacent in the line graph $(H_D)^*$, which corresponds to the deletion of the edge $ef$ in $H^*$. 
This implies that every component of $H^* \setminus M^*$ corresponds to a component of $H_D$, and therefore in every component of $H_D$ there are at most two edges of cardinality $3$ and all other edges have cardinality exactly $2$. Again, the structure of every component of $H_D$ is thus either a (graph) cycle, or there are exactly  two edges, say $g$ and $h$, of cardinality $3$, where $g\cap h=\emptyset$ and there are three  vertex-disjoint (graph) paths that connect the vertices from $g\cup h$ or $g\cap h\neq\emptyset$ and there are two vertex-disjoint (graph) paths that connect the vertices from $g\Delta h$.

Finally we come to the definition of the hitting graph $F$. For every component $C$ of $H_D$, let $D_C$ be the vertices that have been deleted from the hyperedges in $H$ that lie now in $H_D$. Thus, there is a (natural) map $\psi_C$ between the edges from $C$ of cardinality $2$ and $D_C$: $\psi_C(f)=v$  if $\{v\}\cup f\in E(H)$. Since every vertex from $D$ lies in exactly two edges of $H$, it will suffice to explain how we replace the $3$-uniform edges of $H_D$ and the edges of $H$  incident with $D$ by paths $P_3$.  
If $C$ is the (graph) cycle, then we replace every edge of the form $\{v\}\cup f$, where $\psi_C(f)=v$, by $P_3$ so that the graph $F_C$ obtained contains all the edges from $E({C})$ and  is such that $\Delta(F_C)\le 3$ and the vertices from $D_C$ have degree at most $2$ in $F_C$. 
If $C$ contains exactly two $3$-uniform edges (say  $g$ and $h$), then it is possible to replace the edges $g$, $h$ and every edge of the form $\{v\}\cup f$, where $\psi_C(f)=v$, by $P_3$ such that the graph $F_C$ satisfys the following: 
It contains all $2$-uniform edges of $C$, is such that $\Delta(F_C)\le 3$, the vertices from $D_C$ have degree at most $2$ in $F_C$ and $F_C\setminus D_C$ is connected and has exactly two vertices of degree $3$ (this is easily done by considering the structure of the components $C$ from $H_D$ described in the previous paragraph). 
The graph $F$ is then the union of all $F_C$ and observe that $F_C$ and $F_{C'}$ intersect in $D_C\cap D_{C'}$ for $C\neq C'$ and in particular have no common edges. 
Furthermore, every vertex from $D$ has degree $2$ in $F$.


 Let $M$ be a matching in $F$ that saturates $D$. Such a matching exists since $D$ is independent in $F$ (no two vertices from $D$ lie in an edge from $H$), every vertex of $D$ is connected to a vertex of degree $2$ in $F\setminus D$ and $\deg(F)\le 3$. By the definition of $F$  above, every component in $F \setminus M$ is an augmentation of a graph with at most two vertices of degree $3$, and thus an augmentation of a thin graph. We set $F_4:=F\setminus M$.  Next  we contract the edges of $M$ in $F$ obtaining the graph $F/M$. Since $M$ saturates $D$, which are vertices of degree $2$ in $F$, it follows that $F/M$ has maximum degree at most $3$. Theorem~\ref{lem:ACdecomposition} yields a decomposition of $F/M$ into thin graphs $F_1^\prime$, $F_2^\prime$, $F_3^\prime$ such that every edge of $F/M$ appears in precisely two of the graphs. Now we reverse the recontraction procedure. This leads to three graphs $F_1$, $F_2$ and $F_3$ where every edge of $E(F)\setminus M$ appears in exactly two of the graphs, every edge from $M$ appears in all three of them, and each of the $F_1$, $F_2$ and $F_3$ is an augmentation of a thin graph. Together with the graph $F_4=F\setminus M$ we thus constructed the desired decomposition of a hitting graph $F$. 

If $H$ is not linear, then things get in some sense even easier, so we shall be brief. We proceed essentially in the same way. That is, we define the line graph $H^*$ of $H$, which is now not necessarily $3$-regular, but whose maximum degree is at most $3$. Again, Lemma \ref{lem:ACmatching} asserts then the existence of  a matching $M^*$ in $H^*$ such that in $H^* \setminus M^*$ every component has at most $2$ vertices of degree $3$ and all other vertices have degree at most $2$. We then define the set $D$ as before but in the case that the edge $ef\in M^*$ with, say, $e=\{a,b,c\}$ and $f=\{b,c,d\}$ we simply replace the edge $e$ by $\{a,b\}$ and $f$ by $\{c,d\}$ without putting anything into $D$. Once the components of $H_D$ are identified and the graphs $F_C$ are defined we add the edge $bc$ (which we call nonlinear) to those graphs $F_C$, which contain either $b$ or $c$ (or both). Then we choose edges into the matching $M$ as before and add all nonlinear edges such as $bc$ to $M$. The rest of the argument remains the same.

\end{proof}

An $\ell$-th power of a graph $F$, denoted by $F^\ell$, is the graph on $V(F)$, whose vertices at distance at most $\ell$ in $F$ are connected. It is not difficult to see that a thin graph on $n$ vertices can be embedded into $P_n^4$, and thus, an augmentation of a thin graph into $P^8_n$. This motivates the following general definition. 

\begin{definition}[$(k,r,\ell)$-decomposable graphs]
Let $k$, $r$ and $\ell$ be integers.
A graph $F$ on $n$ vertices is called \emph{$(k,r,\ell)$-decomposable} if there exist $k$ graphs $F_i$ with the following properties. Every edge of $F$ appears in exactly $r$ of the $F_i$ 
and there are maps $g_i\colon F_i \to [n]$, which are injective homomoprhisms from $F_i$ into $P_{n}^\ell$. 
Then we denote by $\cF_{k,r,\ell}(n)$ the family of $(k,r,\ell)$-decomposable graphs on $n$ vertices. 
\end{definition}

We can restate our Lemma~\ref{lem:hitting} in the following slightly weaker form. 
\begin{lemma}\label{lem:decomposable}
The family $\cF_{4,3,8}(n)$ hits $\cF^{(3)}(n,2)$ on a path $P_3$.
\end{lemma}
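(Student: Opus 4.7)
The plan is to feed every $H \in \cF^{(3)}(n,2)$ into Lemma~\ref{lem:hitting}, which already supplies almost everything the definition of $(k,r,\ell)$-decomposability requires, and then verify the remaining geometric condition, namely that each of the four spanning subgraphs admits an injective homomorphism into $P_n^8$. Concretely, given $H$, Lemma~\ref{lem:hitting} outputs a graph $F$ that hits $H$ on $P_3$ together with spanning subgraphs $F_1, F_2, F_3, F_4$ of $F$ such that every edge of $F$ lies in exactly three of the $F_i$ and each $F_i$ is an augmentation of a thin graph. Setting $k=4$ and $r=3$, the two combinatorial conditions of the $(k,r,\ell)$-decomposable definition are immediate; it remains only to produce the maps $g_i$.

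For the map $g_i$, I would invoke the observation made immediately before the definition in the paper: a thin graph on at most $n$ vertices embeds injectively and homomorphically into $P_n^4$. The reason is componentwise: a path component is trivial; a cycle $C_m$ can be laid out along $[m]$ in the ``zigzag'' order $v_1, v_3, v_5, \ldots, v_{m-1}, v_m, v_{m-2}, \ldots, v_2$, which realises cycle-adjacency as distance at most $2$; and a component with at most two vertices of degree $3$ is just a bounded number of internally disjoint paths glued at these two vertices, which may be concatenated along $[m]$ with the effect that adjacent vertices are within distance $4$. Since each $F_i$ is merely an augmentation of such a thin graph, I then refine the embedding: wherever a vertex $v$ of the underlying thin graph is matched to an augmentation vertex $v'$, I slip $v'$ into the layout next to $v$. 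This interleaving at most doubles every distance in the thin-graph layout, so $F_i$ embeds injectively as a homomorphism into $P_n^8$, producing the required $g_i$.

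Combining these steps: $k = 4$, $r = 3$, $\ell = 8$ all hold, and so $F \in \cF_{4,3,8}(n)$; since $F$ hits $H$ on $P_3$ and $H \in \cF^{(3)}(n,2)$ was arbitrary, the family $\cF_{4,3,8}(n)$ hits $\cF^{(3)}(n,2)$ on $P_3$, as required.

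The main obstacle, such as it is, is the bookkeeping for the augmentation step: one has to be careful that the interleaving of new matched vertices into the thin-graph layout can be performed \emph{simultaneously} for all matched pairs without increasing any of the previously-$\le 4$ distances to more than $8$. This is straightforward once one lays the underlying thin graph into the doubled positions $\{2,4,\ldots,2n\}$ and uses the odd positions for the augmentation partners, but it is the one place where a concrete choice has to be made rather than quoting a previous statement outright.
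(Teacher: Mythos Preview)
Your proof is correct and follows exactly the route the paper takes: the paper presents Lemma~\ref{lem:decomposable} as a restatement of Lemma~\ref{lem:hitting} in slightly weaker form, relying on the remark (stated just before the definition of $(k,r,\ell)$-decomposability) that a thin graph embeds into $P_n^4$ and hence an augmentation of a thin graph embeds into $P_n^8$. If anything, you supply more detail than the paper does on the embedding step, which the paper leaves at ``it is not difficult to see.''
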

This lemma implies that it is the family $\cF_{4,3,8}(n)$ for which a universal graph is needed. 
This graph will be constructed in the subsection below and briefly explained why a desired embedding works, which will follow from the results of  Alon and Capalbo from~\cite{AC08}.

\subsection{Constructions of universal graphs}
First we briefly describe the construction from~\cite{AC08} of $\cF^{(2)}(n,k)$-universal graphs on $O(n)$ vertices with $O(n^{2-2/k})$ edges. One chooses  $m=20n^{1/k}$, a fixed $d>720$ and a graph $R$ to be a $d$-regular graph on $m$ vertices with the absolute value of all but the largest eigenvalues at most $\lambda$ (such graphs are called $(n,d,\lambda)$-graphs). One can assume that $\lambda\le 2\sqrt{d-1}$ (then $R$ is called Ramanujan) and $\mathrm{girth}({R})\ge \tfrac{2}{3}\log m/\log (d-1)$. Explicit constructions of such Ramanujan graphs have been found first for $d-1$ being a prime congruent to $1$ mod $4$ in~\cite{lubotzky1988ramanujan,Ma88expander}.  Finally, the graph $G_{k,n}$ is defined on the vertex set $V({R})^k$ where two vertices $(x_1,\dots,x_k)$ and $(y_1,\dots,y_k)$ are adjacent if and only if there are at least two indices $i$  such that $x_i$ and $y_i$ are within distance $4$ in $R$. It is easily seen that such a graph $G_{k,n}$ has $O(n)$ vertices, $O(n^{2-2/k})$ edges and maximum degree $O(n^{1-2/k})$. 

The first step in the proof of $\cF^{(2)}(n,k)$-universality of $G_{k,n}$ is Theorem~\ref{lem:ACdecomposition} implying that any graph $F$ with $\Delta(F)\le k$ is $(k,2,4)$-decomposable. In what follows we summarize a straightforward generalization of the central claim from~\cite{AC08} (which is inequality~(3.1) there), from which an  existence of embedding of any graph $F\in \cF^{(2)}(n,k)$ into $G_{k,n}$ follows. Its proof can be taken almost verbatim from~\cite{AC08}.
\begin{lemma}
\label{lem:AChom}
Let $k\ge 3$, $r$ and $\ell$ be natural numbers. 
For any choice of $k$ permutations $g_i \colon [n] \to [n]$ there are $k$ homomorphisms $f_i \colon [n] \to V(F)$ from the path $P_n$ to the above Ramanujan graph $R$ such that the map $f \colon [n] \to V(G_{k,r,\ell}(n))$ defined by $f(v) =(f_1(g_1(v)),\dots,f_k(g_k(v)))$  is injective.
\end{lemma}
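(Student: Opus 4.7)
The plan is to follow the probabilistic argument underlying inequality (3.1) of~\cite{AC08} essentially verbatim, as the move from the original parameters $(r,\ell)=(2,4)$ to general $r,\ell$ only affects the adjacency relation of $G_{k,r,\ell}(n)$, not the vertex set $V(R)^k$ on which injectivity lives. I would choose the $f_1,\dots,f_k$ independently, each as a uniformly random walk of length $n-1$ in $R$ starting from a uniformly chosen vertex, and then bound the expected number of collisions $\sum_{u\neq v}\Pr[f(u)=f(v)]$ strictly below $1$; the probabilistic method then produces a deterministic tuple $(f_1,\dots,f_k)$ for which $f$ is injective.

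By independence of the $f_i$'s,
\[
\Pr[f(u)=f(v)] \;=\; \prod_{i=1}^{k}\Pr\bigl[f_i(g_i(u))=f_i(g_i(v))\bigr],
\]
and since each $g_i$ is a permutation we have $t_i:=|g_i(u)-g_i(v)|\ge 1$. The standard spectral estimate for an $(m,d,\lambda)$-Ramanujan graph with uniform starting distribution yields
\[
\Pr\bigl[f_i(g_i(u))=f_i(g_i(v))\bigr] \;\le\; \frac{1}{m}+\Big(\frac{\lambda}{d}\Big)^{t_i},
\]
and for $t_i$ below roughly $\tfrac{1}{3}\log_d m$ (inside the girth regime of $R$) the tree-like local structure of the walk gives sharper expressions of order $(d-1)^{-t_i/2}$. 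Expanding $\prod_i(1/m+(\lambda/d)^{t_i})$ into its $2^k$ terms and summing each over the pairs $(u,v)$, the key combinatorial input is that for every $g_i$ the number of pairs with $t_i=t$ is exactly $2(n-t)$, a quantity independent of the actual permutation. Combined with the geometric decay coming from the Ramanujan bound $\lambda\le 2\sqrt{d-1}$ and the concrete choice $d>720$, each of the $2^k$ terms is bounded by a convergent geometric series.

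The main obstacle is handling the worst-case alignment of the permutations. If several $g_i$'s happen to coincide (the extremal case being $g_1=\dots=g_k$), then many pairs $(u,v)$ share small $t_i$'s across \emph{all} coordinates simultaneously, and these correlated small-$t_i$ pairs are the dominant source of collisions. Pushing the total expectation below $1$ in this adversarial setup is where the full strength of the Ramanujan property, the girth lower bound $\tfrac{2}{3}\log_d m$ and the large degree $d>720$ enter: they give enough slack in the geometric bounds to absorb the $2^k$ cross-terms generated by the expansion. With these ingredients the computation goes through exactly as in the proof of~\cite[(3.1)]{AC08}, and yields the required injective $f$.
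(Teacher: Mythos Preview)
Your proposal has a genuine gap: the first-moment argument cannot succeed as stated. The target space $V(R)^k$ has size $m^k=(20)^k n$, and you are mapping $n$ vertices into it; hence for \emph{every} choice of walks $f_1,\dots,f_k$ (random or not) the average of $\Pr[f(u)=f(v)]$ over pairs is at least $1/m^k$, so
\[
\sum_{u\neq v}\Pr[f(u)=f(v)]\;\ge\;\binom{n}{2}\cdot\frac{1}{m^k}\;=\;\frac{n-1}{2\cdot 20^k},
\]
which tends to infinity with $n$. (Equivalently: with a uniform start, each coordinate walk is stationary, so $\Pr[f_i(a)=f_i(b)]\ge 1/m$ for every pair, and the product over $i$ is at least $1/m^k$.) Thus the expected number of collisions is $\Theta(n)$, not below $1$, and no amount of Ramanujan slack, girth, or ``adversarial alignment'' analysis can rescue the global first-moment bound. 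The identification of the worst case $g_1=\dots=g_k$ is a red herring; the obstruction is already present for \emph{generic} permutations and for pairs $(u,v)$ that are far apart in every coordinate.

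The argument in~\cite{AC08} that the paper invokes is not a one-shot probabilistic bound but a sequential (greedy) construction: the walks $f_1,\dots,f_k$ are built one at a time as non-backtracking walks in $R$, and at stage $i$ one maintains the invariant
\[
\bigl|\{\,v\in[n]:f_1(g_1(v))=v_1,\dots,f_i(g_i(v))=v_i\,\}\bigr|\;\le\;n^{(k-i)/k}
\qquad\text{for all }(v_1,\dots,v_i)\in V(R)^i.
\]
The Ramanujan/girth hypotheses are used locally, to show that at each step along the walk $f_i$ one can choose the next vertex so that no preimage bucket overflows; this is where the expander mixing and the large degree $d>720$ actually enter. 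Setting $i=k$ in the invariant gives injectivity of $f$. So the correct route is inductive control of all level sets, not a global collision count.
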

More precisely, the $f_i$'s are inductively constructed as non-returning walks preserving the property that for any $i$ vertices $v_1,\dots,v_i \in V(F)$ one has
\begin{align*}
|\{ v \in [n] : f_1(g_1(v))=v_1, \dots, f_i(g_i(v))=v_i \}| \le n^{(k-i)/k}.
\end{align*}
For the last step $i=k$ this is equivalent to injectivity. 

Finally, we explain, how we obtain $\cF_{k,r,\ell}(n)$-universal graphs. 
The choice of the Ramanujan graph $R$ along with the parameters $m$ and $d$ remains the same. 
The graph $G_{k,r,\ell}(n)$ is defined on the vertex set $V({R})^k$ and  two vertices $(x_1,\dots,x_k)$ and $(y_1,\dots,y_k)$ are adjacent if and only if there are at least $r$ indices $i$ such that $x_i$ and $y_i$ are within distance $\ell$ in $R$. 
It is then an easy calculation to show  that $G_{k,r,\ell}(n)$ has $O(n)$ vertices, at most $n\binom{k}{r}d^{r\ell}m^{k-r}=O(n^{2-r/k})$ edges and maximum degree $O(n^{1-r/k})$, where the constants in $O$-notation depend on $k$, $r$, $\ell$ and $d$. 
Lemma~\ref{lem:AChom} implies then the following.

\begin{theorem}
\label{theom:universal}
Let $k\ge 3$, $r$ and $\ell$ be natural numbers. The graph $G_{k,r,\ell}(n)$ is $\cF_{k,r,\ell}(n)$-universal.
\end{theorem}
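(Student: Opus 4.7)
My plan is to combine the $(k,r,\ell)$-decomposability hypothesis directly with Lemma~\ref{lem:AChom} to produce the required embedding; essentially all of the nontrivial work is already packaged into that lemma.

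First, I would unpack the definition: fix $F \in \cF_{k,r,\ell}(n)$ together with its constituent graphs $F_1,\dots, F_k$ and the injective homomorphisms $g_i\colon V(F_i) \to [n]$ into $P_n^{\ell}$. Since only edges of $F$ need to be checked and they are covered $r$-fold by the $F_i$, I may assume that each $F_i$ is a spanning subgraph of $F$ (append isolated vertices if necessary), and then extend each $g_i$ to a full permutation of $[n]$. This extension is harmless: the new vertices contribute no edges, so the extended $g_i$ remains an injective homomorphism into $P_n^{\ell}$. This puts us exactly in the hypothesis of Lemma~\ref{lem:AChom}.

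Next, I would feed these $k$ permutations into Lemma~\ref{lem:AChom} to obtain homomorphisms $f_i\colon P_n \to R$ such that
\[ f\colon V(F) \to V(R)^k, \qquad f(v) = (f_1(g_1(v)),\dots, f_k(g_k(v))) \]
is injective. The main claim is that this $f$ is the desired embedding of $F$ into $G_{k,r,\ell}(n)$. Injectivity is handed to us by the lemma, so only edge preservation remains. For any $uv \in E(F)$, the definition of $(k,r,\ell)$-decomposability guarantees that $uv$ lies in exactly $r$ of the graphs $F_i$. For each such $i$, the homomorphism $g_i\colon F_i \to P_n^{\ell}$ forces $\mathrm{dist}_{P_n}(g_i(u), g_i(v)) \le \ell$, and then the homomorphism $f_i\colon P_n \to R$ forces $\mathrm{dist}_R(f_i(g_i(u)), f_i(g_i(v))) \le \ell$ (graph homomorphisms cannot increase path distances). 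Hence in at least $r$ coordinates the entries of $f(u)$ and $f(v)$ are within distance $\ell$ in $R$, which by the very definition of $G_{k,r,\ell}(n)$ means $f(u)f(v)$ is an edge there.

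The only substantive step is Lemma~\ref{lem:AChom}, which is taken essentially verbatim from~\cite{AC08} and is not what we are being asked to prove; the rest is a transparent chase through definitions. Consequently I do not anticipate any real obstacle, only two small bookkeeping points: first, promoting the partial injections $g_i$ to permutations of $[n]$ so as to match Lemma~\ref{lem:AChom}'s hypothesis, and second, observing that composing a homomorphism $P_n \to R$ with a homomorphism $F_i \to P_n^\ell$ transports the "within distance $\ell$" relation across the two stages in the required coordinate-wise manner.
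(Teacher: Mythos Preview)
Your proposal is correct and follows essentially the same route as the paper's proof: apply Lemma~\ref{lem:AChom} to the $g_i$'s, then verify that the resulting injective map $f$ carries each edge of $F$ to an edge of $G_{k,r,\ell}(n)$ by tracing the ``within distance $\ell$'' relation through the two stages $g_i$ and $f_i$ in each of the $r$ relevant coordinates. Your bookkeeping step of padding the $F_i$ to spanning subgraphs and extending the $g_i$ to full permutations of $[n]$ is a detail the paper leaves implicit, but otherwise the arguments coincide.
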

\begin{proof}
Let $F$ be a $(k,r,\ell)$-decomposable graph on $n$ vertices together with the decomposition $F_1,\dots,F_k$ and injective homomorphisms $g_i \colon V(F_i) \to [n]$ from $F_i$ into $P_n^\ell$. Lemma~\ref{lem:AChom} asserts the existence of the homomorphisms 
 $f_i\colon [n] \to V({R})$  from  $P_n$ to $R$ for every $i\in[k]$, so that  the map $f \colon V(F) \to V(G_{k,r,\ell}(n))$ given by $f(v) = (f_1(g_1(v)), \dots, f_k(g_k(v)))$ is injective.

It is clear that  the composition of $f_i$ with $g_i$ is a homomorphism from $F_i$ to $R^\ell$.
Furthermore, every edge $\{ u,v \}$ from $F$ lies in $r$ graphs $F_i$. Thus, there are $r$ indices $i$ such that $g_i(u)$ and $g_i(v)$ are distinct and within distance $\ell$ in $P_n$.
This implies that $f_i(g_i(u))$ and $f_i(g_i(v))$ are also distinct and within distance $\ell$ in $F$.
By the definition of  $G_{k,r,\ell}(n)$ this implies that $f(u)$ and $f(v)$ are adjacent in  $G_{k,r,\ell}(n)$  and $f$ is the desired embedding of $F$ into  $G_{k,r,\ell}(n)$.
\end{proof}

From this, Theorem~\ref{theom:sec} follows immediately for $r=3$.
\begin{proof}[Proof of Theorem \ref{theom:sec}, case $r=3$]
Note, that the graph $G_{4,3,8}(n)$ has $m^4=O(n)$ vertices and $O(n m) = O(n^{5/4})$ edges. 
By Theorem~\ref{theom:universal} $G_{4,3,8}(n)$ is $\cF_{4,3,8}(n)$-universal,  and since $\cF_{4,3,8}(n)$ hits $\cF^{(3)}(n,2)$ on $P_3$, Lemma~\ref{lem:Hruni} implies that $\cH_{P_3,3}(G_{4,3,8}(n))$ is $\cF^{(3)}(n,2)$-universal, has $O(n)$ vertices and $O(n^{3/2})$ edges. 
This proves the case $r=3$.
\end{proof}

\begin{remark}
We believe that the constructions from \cite{AC07} can also be adapted to work with $(k,r,\ell)$-decomposable graphs. For the cases discussed here this would lead to universal graphs on $n$ vertices, where the number of edges is some polylog factor larger.
\end{remark}

\subsection{\texorpdfstring{Proof of Theorem \ref{theom:sec} for odd $r\ge 5$}{Proof of Theorem~1.3 for odd r ≥ 5}}
\subsubsection{A universal hypergraph}
First we define the hypergraph $\cH$ which will turn out to be  $\cF^{({r})}(n,2)$-universal. Let $t=(r-3)/2$. 
Let $G_1$,\ldots, $G_{t+1}$ be vertex-disjoint graphs,  where $G_1$, \ldots, $G_t$ are copies of  $C^4_n$ (the fourth power of the cycle $C_n$) and $G_{t+1}$ is a copy of the graph $G_{4,3,8}(n)$, introduced in the previous section. We define $\cH$ to be the hypergraph on the vertex set $\dcup_{i=1}^{t+1} V(G_i)$, and the edges are $r$-element subsets $f$ such that, with $f_i:=f\cap V(G_i)$, each $G_{i}[f_i]$ contains a copy of $P_{|f_i|}$, a path on $|f_i|$ vertices (thus, $P_0$ is the empty graph, $P_1=K_1$ and $P_2=K_2$). Certainly, $\cH$ has $O(n)$ vertices. How many edges does the hypergraph $\cH$ contain? For this we need to choose paths $P_{\ell_i}$ from every $G_i$ such that $\ell_i\in\{0,1,2,3\}$ and $\sum_{i=1}^{t+1}\ell_i=r$. Because $G_1$, \ldots, $G_t$ have maximum degree $8$, and $G_{t+1}$ has maximum degree $O(n^{1/4})$, we compute the number of edges of $\cH$ to be $O(n^{t+1}n^{2/4})=O(n^{r/2})$, as desired.

Given any hypergraph $H\in \cF^{({r})}(n,2)$, we show that one can partition its vertex set into disjoint subsets $X_1$, \ldots, $X_{t+1}$ and, for each $i$, define the hypergraph $H(X_i)$ -- the (not necessarily uniform) hypergraph on the vertex set $X_i$ -- whose edges are restrictions to $X_i$, i.e.\ $E(H(X_i))=\{f\cap X_i\colon f\in E(H)\}$. The hypergraphs $H(X_1)$, \ldots, $H(X_t)$ have simple structure --- each component contains at most two edges of cardinality $3$. The hypergraph $H(X_{t+1})$ contains hyperedges of cardinality at most $3$. Moreover, these hypergraphs have maximum vertex degree at most $2$. We defer this structural decomposition to the subsection below. 

Let us see, how then  $H$ can be embedded into the hypergraph $\cH$. Owing to the structure of $H(X_1)$, 
\ldots, $H(X_t)$, one can easily find injective maps $g_i\colon X_i\to V(G_i)$, such that every hyperedge $f\in E(H(X_i))$ is such that $G_i\left[g_i(f)\right]$ contains a path $P_{|f|}$ -- this can be seen by replacing $f$  in $H(X_i)$ through an arbitrary path $P_{|f|}$ and finding, due to the component structure of $H(X_i)$, an injective graph homomorphism from the appropriately defined graph into $G_i$. 
As for $H(X_{t+1})$ we can assume first that it is $3$-uniform and in $\cF^{({3})}(n,2)$ by adding some `dummy' vertices appropriately (but using the same notation for $H(X_i)$). The $\cF_{4,3,8}(n)$-universality of $G_{t+1}=G_{4,3,8}(n)$ and the fact that $\cF_{4,3,8}(n)$ hits $\{H(X_{t+1})\}$ on $P_3$ yields an injective map $g_{t+1}\colon X_{t+1}\to V(G_{t+1})$ such that $G_{t+1}\left[g_{t+1}(f)\right]$ contains $P_3$ for every $f\in E(H(X_{t+1}))$. 
Deleting the dummy vertices from the edges $f\in E(H(X_{t+1}))$ (but keeping the edges) we see that $g_{t+1}$ remains injective and  $G_{t+1}\left[g_{t+1}(f)\right]$ contains $P_{|f|}$ for every $f\in E(H(X_{t+1}))$. 
It should be clear that $g\colon V(H)\to V(\cH)$ with $g|_{X_i}=g_i$, for all $i\in[t+1]$, is injective. It remains to show that it is a homomorphism into $\cH$. Given an edge $e$ of $H$, by the definition of $H(X_i)$ and the choices of $g_i$'s, we see that $e\cap X_i\in E(H(X_i))$ and $G_i\left[g_i(e\cap X_i)\right]$ contains a path $P_{|e\cap X_i|}$ for all $i$. But this is exactly the requirement for $g(e)$ to be the edge in $\cH$. Thus, $g$ embeds $H$ into $\cH$. 

\subsubsection{Yet another decomposition}
 Let $H\in \cF^{({r})}(n,2)$. Again we assume first that $H$ is linear and $2$-regular. We consider, as in the case $r=3$, the line graph $H^*$, which is $r$-regular now. Hence Lemma~\ref{lem:ACmatching} yields a spanning subgraph $H^*_1$, in which every vertex has degree $2$ or $3$ and every component has at most $2$ vertices of degree $3$. 

If $C$ is a component of $H^*_1$, then we define $V_C$ as all vertices $v$ such that $\{v\}=e\cap f$ for some $ef\in E({C})$ (recall that $H$ is assumed to be a linear hypergraph). We write $H_1$ to denote the hypergraph on the vertex set $\cup V_C$ where the union is over all components $C$ of $H^*_1$, and for every edge $f\in E(H)$ let the set $\left\{v\colon \{v\}=e\cap f \text{ for some }ef\in E({C})\right\}$ be an edge of $H_1$. Observe, that these edges have cardinality  either $2$ or $3$. Indeed,  a vertex of degree $j$ in some component $C$ is the edge of $H$ that intersects $j$ other edges of $H$ in different vertices, which give rise to a $j$-uniform edge in $H_1$. By construction, $H_1$ is linear and $2$-regular. Crucially, the components of $H_1$ have simple structure, since these are `inherited' from the components  $C$. More precisely, each component of $H_1$ has at most two $3$-uniform edges and all other edges have cardinality $2$.

We denote $\tH_{1}$ as the hypergraph obtained by deleting from its edges all vertices from $V_C$ (we call this procedure as `reducing uniformity'). It should be clear that, in this way every edge of $H$ can be written uniquely as the union of one edge of $H_1$ and the other from $\tH_1$. Since $H_1$ is not necessarily uniform, the hypergraph $\tH_{1}$ is now a not necessarily uniform hypergraph as well, but its edges have cardinalities  either $r-3$ or $r-2$. 

The next step calls for an inductive procedure with a blemish, that $\tH_{1}$ is not necessarily uniform. But this can be remedied by adding `dummy' vertices and edges to $\tH_{1}$ and obtaining an $(r-2)$-uniform linear hypergraph still denoted by $\tH_{1}$ which is $2$-regular (once we are finished with decomposition, we will reduce the uniformity by deleting these dummy vertices from edges, but keeping the altered edges). We keep doing this reduction until we arrive at the hypergraph $\tH_{t}$ where $t=(r-3)/2$, and thus $H_{t+1}:=\tH_t$ is a $3$-uniform linear hypergraph, which is $2$-regular. 

Before we proceed, let us summarize what we achieved so far. We have found hypergraphs $H_1$,\ldots, $H_t$ and the 
hypergraph $H_{t+1}$ with pairwise disjoint vertex sets, so that each $H_i$ is linear, $2$-regular and its edge uniformities are either $2$ or $3$ and each component of $H_i$ has simple structure (recall:  each component has at most two $3$-uniform edges and all other edges have cardinality $2$), while $H_{t+1}$ is a $3$-uniform linear hypergraph, which is $2$-regular. 

 Next we reduce uniformities of the $H_i$ and and of $H_{t+1}$ by deleting dummy vertices from the edges. 
 In this way it may happen, that the uniformity of some edges of the hypergraph family will be reduced to $0$ (in which case they disappear from that particular hypergraph), while some others will be reduced to $1$, in which case we get edges of the type $\{v\}$, which we will use. We finally obtain the promised family $H_1$,\ldots, $H_{t+1}$. 

The case when $H$ is not a linear hypergraph can be treated similarly and we omit the details.
\subsection{A general problem}

To prove the embedding for other parameters of $r$ and $\Delta$ we would need the analogue of Lemma~\ref{lem:decomposable}, that  is, a solution to the following problem.

\begin{problem}\label{problem}
Let  $r \ge 3$ and $\Delta \ge 3$ be integers.  Find $\ell$ such that $\cF_{(r-1)\Delta,r,\ell}(n)$ hits $\FnD$ on $P_{r}$.
\end{problem}

It is immediate that Theorem~\ref{theom:universal}   yields $\cF_{(r-1)\Delta,r,\ell}(n)$-universal graphs $G$ on $O(n)$ vertices with $O(nm^{(r-1)\Delta-r})=O(n^{2-r/((r-1)\Delta)})$ edges and maximum degree $O(n^{1-r/((r-1)\Delta)})$. From this the solution to  Problem~\ref{problem} would yield optimal universal hypergraphs on $O(n)$ vertices with 
$|V(G)|(|E(G)|/|V(G)|)^{r-1}=O(n^{r-r/\Delta})$ edges. Clearly, the interesting cases are $\Delta\ge 3$, $r\nmid \Delta$ and $r$ odd.

\begin{remark}
An alternative to our approach is to extend the constructions for universal graphs from \cite{AC07,AC08,ACKRRS01} to hypergraphs.
To follow a similar embedding scheme one would ask for appropiate decomposition results for hypergraphs.
For example, for $H \in \cF^{(3)}(n,2)$ the task is to find subhypergraphs $H_1,\dots,H_4$ which are `thin' and such that every hyperedge appears in exactly three of them.
\end{remark}

 \section{Proof of Theorem~\ref{thm:Erm}}
\begin{proof}[Proof of Theorem~\ref{thm:Erm}]
 To prove the existence of optimal $\Erm$-universal hypergraphs we exploit the proof of Alon and Asodi~\cite{AA02}. 
Take any $H \in \Erm$ and replace all edges of $H$ by cliques of size $r$. 
This gives a graph with at most $\binom{r,2}m$ edges and thus there exists a graph $G$ with $O(m^2/\ln^2m)$ edges which is $\cE^{(2)}(\binom{r,2}m)$-universal. 
We define the $r$-uniform hypergraph $\cK_r(G)$ on the vertex set $V(G)$ with edges being the vertex sets of the copies of $K_r$ in $G$. It is straightforward to see that $\cK_r(G)$ is $\Erm$-universal and thus it remains to estimate the number of edges in $\cK_r(G)$.
 
 The $\cE^{(2)}(m)$-universal graph  $G$ of Alon and Asodi~\cite{AA02} is defined on the vertex set $V=V_0 \cup V_1 \cup \dots \cup V_k$ where $k= \lceil \log_2 \log_2 m \rceil$, $|V_0|=4m/\log_2^2m$ and $|V_i|=4m2^i/\log_2 m$ for $i \in [k]$.
 A vertex in $V_0$ is connected to any other vertex and the graph induced on $V_1$ is a clique.
For any $u \in V_i$, $i \ge 2$, and $v \in V_1 \cup V_2 \cup \dots \cup V_i$ with $u \not= v$ the edge $uv$ is present independently with probability $\min\left( 1 , 8^{3-i} \right)$. It is shown in~\cite{AA02} that with probability at least $1/4$ the graph $G$ has $O(m^2/\ln^2m)$ edges and is $\cE^{(2)}(m)$-universal. We count the expected number of copies of $K_r$ in $G$, i.e.\  $\EE (|E(\cK_r(G))| )$.
 
There are several possible types of cliques $K_r$ in $G$. Indeed, we need to choose $r$ vertices from $V_0$,\ldots,$V_k$, and a particular \emph{type} of a possible $r$-clique $K$ in $G$ is specified by $\alpha$, which is the number of its vertices in $V_0$ and by numbers $t_1\le \ldots\le t_\gamma$ (all from $[k]$), which specify to which sets $V_i$ the remaining $\gamma=r-\alpha$ vertices belong to.  There are at most $|V_0|^\alpha \prod_{j=1}^{\gamma} |V_{t_j}|$ cliques of a particular type, and each such clique occurs with probability  $\prod_{j=1}^{\gamma}\left[\min\left(1 , 8^{3-t_j}\right)\right]^{j-1}$. It is clear that there are at most $|V_0|^{r-1}|V(G)|\le \frac{(4m)^{r-1}\cdot (32 m)}{(\log_2 m)^{2(r-1)}}=o\left(\frac{m^r}{\log_2^{r} m}\right)$ cliques $K_r$ in $G$ that intersect $V_0$ in at least $r-1$ vertices. 
 Next we upper bound the expected number of edges in $\cK_r(G)$ as follows:
 {\allowdisplaybreaks
 \begin{multline}\label{eq:expectation_AA}
 \EE (|E(\cK_r(G))|) \le |V_0|^{r-1}|V(G)|+ \sum_{\substack{\alpha+\gamma = r\\ \gamma\ge 2}} \sum_{1 \le t_1 \le \dots \le t_\gamma \le k} |V_0|^\alpha \prod_{j=1}^\gamma |V_{t_j}| \cdot \prod_{j=1}^{\gamma}\left[\min\left(1 , 8^{3-t_j}\right)\right]^{j-1} \\
\le o\left(\frac{m^r}{\log_2^{r} m}\right)+ \sum_{\gamma\ge 2}^r  \left( \frac{4m}{\log_2m} \right)^r \frac{1}{\log_2^{r-\gamma} m} \sum_{1 \le t_1 \le \dots \le t_\gamma \le k} 2^{\sum_{j=1}^\gamma t_j }\cdot 2^{\sum_{j=1}^\gamma \min\left\{0,(9-3t_j)(j-1)\right\}}, 
 \end{multline}}
and in order to simplify it further we first estimate the inner sum of the second summand by splitting it according to $t_1$ as follows:{\allowdisplaybreaks
\begin{align*}
&\sum_{1\le t_1 \le \dots \le t_\gamma \le k} 2^{\sum_{j=1}^\gamma t_j }\cdot 2^{\sum_{j=1}^\gamma \min\left\{0,(9-3t_j)(j-1)\right\}}\\
&\le \sum_{ t_1\le 19}\sum_{ \substack{t_j\ge 1\\ j=2,\ldots,\gamma}} 2^{\sum_{j=1}^\gamma t_j +\sum_{j=1}^\gamma \min\left\{0,(9-3t_j)(j-1)\right\}}+
\sum_{ t_1\ge 20}\sum_{ \substack{t_j\ge t_1\\ j=2,\ldots,\gamma}} 2^{\sum_{j=1}^\gamma t_j +\sum_{j=1}^\gamma \min\left\{0,(9-3t_j)(j-1)\right\}}\\
&\le 2^{20}\sum_{ \substack{t_j\ge 1\\ j=2,\ldots,\gamma}} 2^{\sum_{j=2}^\gamma \left( t_j + \min\left\{0,(9-3t_j)(j-1)\right\}\right)}+
\sum_{t_1\ge 20}2^{t_1}\sum_{ \substack{t_j\ge t_1\\ j=2,\ldots,\gamma}} 2^{\sum_{j=2}^\gamma \left( t_j +(9-3t_j)(j-1)\right)}\\
&\le 2^{20}\left(\sum_{t\ge 1} 2^{ t + \min\left\{0,(9-3t)\right\}}\right)^{\gamma-1}+\sum_{ t_1\ge 20}2^{t_1}\left(\sum_{t\ge t_1} 2^{t+(9-3t)}\right)^{\gamma-1} \\
&\begin{aligned}
\le 2^{20}\left(6+\sum_{t\ge 3}2^{9-2t}\right)^{\gamma-1}+\sum_{ t_1\ge 20}2^{t_1}\left(\sum_{t\ge t_1} 2^{-3t/2}\right)^{\gamma-1}\le 2^{20+5\gamma}+\sum_{ t_1\ge 20}2^{t_1-\frac{3t_1(\gamma-1)}{2} +2(\gamma-1)}\\
\le 2^{20+5\gamma}+ 2^{2(\gamma-1)}\sum_{ t_1\ge 20}2^{-t_1/2}\le 2^{21+5\gamma}\le 2^{21+5r}.
\end{aligned}
\end{align*}}
 This allows us to further upper bound~\eqref{eq:expectation_AA} by
\[
 \EE (|E(\cK_r(G))|) \le r2^{21+5r} \left( \frac{4m}{\log_2m} \right)^r.
\]
By Markov's inequality, the probability that $|E(\cK_r(G))|$ is at least $5r2^{21+5r} \left( \frac{4m}{\log_2m} \right)^r$ is at most $1/5$. Thus, taking $\hat{m}=\binom{r}{2}m$, there exists an $\cE^{(2)}(\hat{m})$-universal graph with $O\left( \frac{\hat{m}^r}{\log^r_2 \hat{m}} \right)$ copies of $K_r$. This implies that there exists an $\Erm$-universal hypergraph $H$ with $O(m^r/\ln^rm)$ edges. 
\end{proof}

It is possible to prove that there exist such hypergraphs $H$ with  $rm$ vertices which is optimal. However,  no explicit construction is known. 

\subsection*{Acknowledgements}The authors would like to thank Rajko Nenadov for useful discussions, and the referees for helpful suggestions.
 

\begin{thebibliography}{10}

\bibitem{Alon10}
N.~Alon, \emph{{Universality, tolerance, chaos and order}}, {An irregular mind.
  Szemer\'edi is 70. Dedicated to Endre Szemer\'edi on the occasion of his
  seventieth birthday.}, Berlin: Springer, 2010, pp.~21--37.

\bibitem{AA02}
N.~Alon and V.~Asodi, \emph{Sparse universal graphs}, Journal of Computational
  and Applied Mathematics \textbf{142} (2002), no.~1, 1--11.

\bibitem{AC07}
N.~Alon and M.~Capalbo, \emph{Sparse universal graphs for bounded-degree
  graphs}, Random Structures Algorithms \textbf{31} (2007), no.~2, 123--133.

\bibitem{AC08}
\bysame, \emph{Optimal universal graphs with deterministic embedding},
  Proceedings of the nineteenth annual ACM-SIAM symposium on Discrete
  algorithms, Society for Industrial and Applied Mathematics, 2008,
  pp.~373--378.

\bibitem{ACKRRS00}
N.~Alon, M.~Capalbo, Y.~Kohayakawa, V.~Rodl, A.~Rucinski, and E.~Szemer{\'e}di,
  \emph{Universality and tolerance}, Foundations of Computer Science, 2000.
  Proceedings. 41st Annual Symposium on, IEEE, 2000, pp.~14--21.

\bibitem{ACKRRS01}
N.~Alon, M.~Capalbo, Y.~Kohayakawa, V.~R{\"o}dl, A.~Ruci{\'n}ski, and
  E.~Szemer{\'e}di, \emph{Near-optimum universal graphs for graphs with bounded
  degrees (extended abstract)}, Approximation, randomization, and combinatorial
  optimization ({B}erkeley, {CA}, 2001), Lecture Notes in Comput. Sci., vol.
  2129, Springer, Berlin, 2001, pp.~170--180.

\bibitem{BCEGS82}
L.~Babai, F.~R. Chung, P.~Erd{\"o}s, R.~L. Graham, and J.~H. Spencer, \emph{On
  graphs which contain all sparse graphs}, North-Holland Mathematics Studies
  \textbf{60} (1982), 21--26.

\bibitem{conlon2015almost}
D.~Conlon, A.~Ferber, R.~Nenadov, and N.~{\v{S}}kori{\'c},
  \emph{Almost-spanning universality in random graphs}, Random Structures
  Algorithms, \doi{10.1002/rsa.20661} (2016).

\bibitem{DKRR15}
D.~Dellamonica, Jr., Y.~Kohayakawa, V.~R{\"o}dl, and A.~Ruci{\'n}ski, \emph{An
  improved upper bound on the density of universal random graphs}, Random
  Structures Algorithms \textbf{46} (2015), no.~2, 274--299.

\bibitem{dudek2013approximate}
A.~Dudek, A.~Frieze, A.~Ruci{\'n}ski, and M.~{\v{S}}ileikis, \emph{Approximate
  counting of regular hypergraphs}, Information Processing Letters \textbf{113}
  (2013), no.~19, 785--788.

\bibitem{kim2014universality}
J.~H. Kim and S.~J. Lee, \emph{Universality of random graphs for graphs of
  maximum degree two}, SIAM Journal on Discrete Mathematics \textbf{28} (2014),
  no.~3, 1467--1478.

\bibitem{lubotzky1988ramanujan}
A.~Lubotzky, R.~Phillips, and P.~Sarnak, \emph{Ramanujan graphs}, Combinatorica
  \textbf{8} (1988), no.~3, 261--277.

\bibitem{Ma88expander}
G.~{Margulis}, \emph{{Explicit group-theoretical constructions of combinatorial
  schemes and their application to the design of expanders and
  concentrators.}}, {Probl. Inf. Transm.} \textbf{24} (1988), no.~1, 39--46
  (English).

\bibitem{PP_SparseUniversal}
O.~Parczyk and Y.~Person, \emph{Spanning structures and universality in sparse
  hypergraphs}, arXiv preprint \arxiv{1504.02243v2} (2015).

\end{thebibliography}

\providecommand{\bysame}{\leavevmode\hbox to3em{\hrulefill}\thinspace}
\providecommand{\MR}{\relax\ifhmode\unskip\space\fi MR }
\providecommand{\MRhref}[2]{%
  \href{http://www.ams.org/mathscinet-getitem?mr=#1}{#2}
}
\providecommand{\href}[2]{#2}
\def\MR#1{\relax}

\end{document}